\documentclass{amsart}
\usepackage{amssymb}
\usepackage{hyperref} 

\begin{document} 
\title[Global invertibility and applications]
{Global invertibility of mappings between Banach spaces and
applications to nonlinear equations}

\author[M. Galewski and D. Repov\v{s}]
{Marek Galewski and Du\v{s}an Repov\v{s}}

\address{Marek Galewski \newline
 Institute of Mathematics, Lodz University of
Technology, Wolczanska 215, 90-024 Lodz, Poland}
\email{marek.galewski@p.lodz.pl}

\address{Du\v{s}an Repov\v{s} \newline
Faculty of Education
and
 Faculty of Mathematics and Physics,
University of Ljubljana, SI-1000, Slovenia}
\email{dusan.repovs@guest.arnes.si}

\subjclass[2010]{57R50, 58E05}
\keywords{Global diffeomorphism; local diffeomorphism; algebraic equation;
\hfill\break\indent mountain pass lemma; integro-differential system}

\begin{abstract}
 We provide sufficient conditions for a mapping between two Banach
 spaces to be a diffeomorphism using the approach of an auxiliary functional
 and also by the aid of a duality mapping corresponding to a normalization
 function. We simplify and generalize our previous results. Applications to
 algebraic equations  and to integro-differential systems are also given.
\end{abstract}

\maketitle
\numberwithin{equation}{section}
\newtheorem{theorem}{Theorem}[section]
\newtheorem{lemma}[theorem]{Lemma}
\newtheorem{proposition}[theorem]{Proposition}
\newtheorem{remark}[theorem]{Remark}
\newtheorem{corollary}[theorem]{Corollary}
\newtheorem{example}[theorem]{Example}
\allowdisplaybreaks

\section{Introduction}

The aim of this work is to present a scheme allowing for a mapping between
two Banach spaces which defines a local diffeomorphism, to become a global one.
The methods applied are those of critical point theory. The main idea behind
the approach presented here is already well known, i.e.\ adding some
conditions which make a local diffeomorphism (i.e.\ invertibility around each
point) a global one (i.e.\ a mapping between whole spaces). It seems this was
started by Hadamard \cite{Hadamard} and later developed by several
authors in a number of works, out of which we single out the
following:
\begin{itemize}
\item[(i)] \cite{zampieri} for some overview of invertibility results and some
general up-to-date version of the Hadamard-L\'{e}vy Theorem
(see \cite{Levy,plastock});

\item[(ii)] \cite{ioffee} for a general result by which we compare our
approach in \cite{galewskiRadulescu}, showing that these two do not overlap;

\item[(iii)] \cite{fijalkowski2} for a development of higher-order invertibility
conditions;

\item[(iv)] \cite{katriel} for the metric space setting;

\item[(v)] \cite{rad} for some invertibility on a finite-dimensional space.
\end{itemize}

Our inspiration lies in the works \cite{SIW,idczakIFT,IDCZAK_Gen_GIFT}
 which we somewhat improve and generalize while retaining
their main methodology. Although we use known approaches and techniques, we
put emphasis on presenting the scheme, which while being intuitive in the
case of functions of one variable, becomes more involved when the dimension
is enlarged to certain $n\geq 2$ and even next, to an infinite setting. We
underline that the simple remarks working on the real line can be suitably
extended to cover other cases of domains (finite and infinite). In this
work we will concentrate on the case of mappings between Banach
spaces and our aim is to make, with the aid of some additional assumption,
the local invertibility condition a global one, using some already developed
tools. Thus we will somewhat simplify known proofs and provide some insight into
understanding of the tools used, and  introduce new notions and
approaches. We still believe it is of some importance to collect in one work
certain results and present them in a more uniform manner.

This article is organized as follows. In Section 2 we remark on the
scheme leading to producing a global diffeomorphism out of a local one in
the spirit of the Hadamard Theorem. In Section 3 we provide necessary
background with which we can proceed to the infinite-dimensional space setting.
We present two approaches leading to global diffeomorphism between two Banach
spaces with the aid of an auxiliary functional and a duality mapping. In the
last section we provide some applications to algebraic equations, first in a
finite-dimensional setting and then to the integro-differential systems.

\section{An outline of results in the finite-dimensional setting}

We start by recalling that a $C^1$-mapping
 $f:\mathbb{R}^n\to\mathbb{R}^n$ is locally invertible and its inverse
function is also $C^1$ provided that $\det f'(x) \neq 0$ for any
$x\in\mathbb{R}^n$. The last assumption means that $f'(x) $ is
invertible for any $x\in\mathbb{R}^n$, i.e.\
 is a linear mapping which is a bijection between $\mathbb{R}^n$ and
$\mathbb{R}^n$.
Let us first consider $C^1$-functions $f:\mathbb{R}\to\mathbb{R}$ such that
$f'(x) \neq 0$ for any $x\in\mathbb{R}$. Of course, such a function need not
be globally invertible as a simple
example of $\arctan $ shows. On the other hand, a $C^1$-function
$f(x) =x^{3}$ is globally invertible while its inverse is not $C^1$.
This function is not locally $C^1-$invertible around $0$. At the same
time, function $f(x) =x^{3}+x$ is locally (and globally)
invertible. We see that $| f_{i}(x) |\to \infty $ as $| x| \to \infty $ for
$i=1,2$. Such a property is called coercivity of a functional
$x\to | f(x) | $ and together with continuity it
says that this functional has at least one argument of a minimum. This is a
version of the celebrated Weierstrass Theorem since coercivity of
$| f| $ yields that for any $d\in \mathbb{R}$, the set $S^{d}$ is bounded, where
\[
S^{d}=\{u\in E:| f(u)| \leq d\}.
\]
Since continuity provides that such sets are closed, we are done.
In fact, it is easy to see that:
\begin{quote}
A locally invertible $C^1$-function
$f:\mathbb{R}\to\mathbb{R}$ (i.e.\ $f'(x) \neq 0$ for any
$x\in\mathbb{R}$) such that
$| f(x) | \to\infty $ as $| x| \to \infty $
is globally invertible, that is $f^{-1}:\mathbb{R}\to\mathbb{R}$
 is defined and $C^1$.
\end{quote}
It is easy to prove this result considering an auxiliary functional
$g:\mathbb{R}\to\mathbb{R}$ defined by
\[
g(x) =| f(x) -y| ^{2}
\]
for a fixed $y\in \mathbb{R}$. This functional is also coercive and contrary
to $x\to |f(x) | $, it is $C^1$. Thus for a fixed $y\in\mathbb{R}$
functional $g$ has an argument of a minimum $\overline{x}$ which by
Fermat's rule satisfies equation
\[
(f(\overline{x}) -y) f'(\overline{x}) =0
\]
and since $f'(x) \neq 0$ for any $x\in\mathbb{R}$ we see that
$f(\overline{x}) =y$ which solves the problem of $f $ being onto.
Now we concentrate on the question of $f$ being injective.
This is done by the help of Rolle's Theorem which, as Jean Mawhin points
out in his celebrated book \cite{Mawhin}, serves as an introductory Mountain
Pass Theorem. Thus assume that for some $y$ there exist $x_1,x_2$ such
that $f(x_1) =f(x_2) =y$. We see that for
functional $g$ it holds that $g(x_1) =g(x_2) =0$. Thus, by Rolle's Theorem
on interval $(x_1,x_2) $ we
must have at least one point $x_3$ such that $g'(x_3) =0$. This point, with the aid of the Weierstrass Theorem, can
be chosen as a point of a maximum over $[ x_1,x_2] $. Thus we
see that $g(x_3) >0$ and on the other hand, by the Fermat rule
$g(x_3) =0$ since $g'(x_3) =0$ implies $g(x_3) =0$. A contradiction completes
this simple proof.

We are aware that the above argumentation is far too sophisticated for a
single variable setting since under  the above assumptions it is strictly
monotone. But this very proof has the advantage of a generalization to maps
between Euclidean spaces and further between Banach spaces, provided that we
properly understand how the Weierstrass Theorem is generalized and what are
the analogies between all notions involved. Moreover, the additional
assumption that $\| f(x) \| \to \infty $
as $\| x\| \to \infty $ answers the following
question for a $C^1-$mapping $f:\mathbb{R}^n\to \mathbb{R}^n$:
\begin{quote}
What assumptions should be imposed on the mapping $f$  to
become a global diffeomorphism from local one?
\end{quote}
As for the known result we recall the Hadamard Theorem, see
\cite[Theorem 5.4]{jabri}.

\begin{theorem}\label{MainTheo copy(2)}
Let $X$, $Y$ be finite dimensional Euclidean spaces.
Assume that $f:X\to Y$ is a $C^1$-mapping such that
\begin{itemize}
\item $f'(x)$ is invertible for any $x\in X$,

\item  $\| f(x) \| \to \infty $ as $
\| x\| \to \infty $.
\end{itemize}
Then $f$ is a diffeomorphism.
\end{theorem}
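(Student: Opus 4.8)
The plan is to follow exactly the three-step scheme already sketched for the real line, reading each classical ingredient in its $n$-dimensional incarnation. Since $f'(x)$ is invertible at every $x$ (which in particular forces $\dim X=\dim Y$), the inverse function theorem shows that $f$ is a local diffeomorphism: each point has a neighborhood mapped $C^1$-diffeomorphically onto a neighborhood of its image. Consequently $f$ is an open map, and once I prove that $f$ is a bijection it will automatically be a global diffeomorphism, because a continuous open bijection is a homeomorphism whose inverse coincides locally with the $C^1$ local inverses produced by the inverse function theorem. So the whole task reduces to surjectivity and injectivity.

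For surjectivity, I would fix $y\in Y$ and introduce the auxiliary functional $g(x)=\|f(x)-y\|^2$. It is $C^1$, and coercive, since $\|f(x)\|\to\infty$ forces $g(x)\to\infty$ as $\|x\|\to\infty$; hence its sublevel sets are bounded and closed, and the Weierstrass theorem furnishes a global minimizer $\overline{x}$. Fermat's rule gives $g'(\overline{x})=0$, that is $f'(\overline{x})^{*}\bigl(f(\overline{x})-y\bigr)=0$; since $f'(\overline{x})$ and therefore its adjoint are invertible, this forces $f(\overline{x})=y$. Thus $f$ is onto.

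For injectivity I would argue by contradiction and replace Rolle's theorem by the mountain pass theorem, exactly as anticipated in the introductory discussion. Suppose $f(x_1)=f(x_2)=y$ with $x_1\neq x_2$. Then $x_1$ and $x_2$ are both global minimizers of $g$, with $g(x_1)=g(x_2)=0$. Because $f$ is a local diffeomorphism near $x_1$, the point $x_1$ is an isolated zero of $g$, hence a strict local minimum: there is a small sphere around $x_1$ on which $g\geq\delta>0$, while $x_2$ lies outside this sphere with $g(x_2)=0$. Every continuous path joining $x_1$ to $x_2$ must cross the sphere, so the mountain pass level $c=\inf_{\gamma}\max_{t}g(\gamma(t))\geq\delta>0$. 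Coercivity of $g$ in finite dimensions yields the Palais--Smale condition (a Palais--Smale sequence is bounded, hence has a subsequence converging to a critical point), so the mountain pass theorem produces a critical point $x_3$ with $g(x_3)=c>0$. But $g'(x_3)=0$ gives, as before, $f(x_3)=y$ and therefore $g(x_3)=0$, a contradiction.

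The routine steps are the inverse-function-theorem bookkeeping, the derivative computation for $g$, and the passage from ``bijective local diffeomorphism'' to ``global diffeomorphism''. The step I expect to be most delicate is the injectivity argument, where one must carefully establish the strict mountain pass geometry (the positive separation $\delta$, resting on $x_1$ being an isolated zero) and verify the Palais--Smale condition so that the mountain pass critical value is genuinely attained. This is precisely the point at which the one-variable use of Rolle's theorem is upgraded to a genuine critical point theorem, and it is also the template that the later infinite-dimensional results will have to reproduce once coercivity and a suitable compactness condition replace finite-dimensionality.
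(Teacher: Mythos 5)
Your proof is correct, and it follows the paper's general scheme (auxiliary functional $g(x)=\|f(x)-y\|^2$, Weierstrass plus Fermat for surjectivity, a critical point theorem replacing Rolle for injectivity); the surjectivity half is identical to the paper's. The difference lies in the key lemma for injectivity. The paper invokes Courant's finite-dimensional three critical points theorem: $g$ is coercive with two \emph{strict} relative minima $x_1,x_2$ (strictness coming from local invertibility), hence it has a third critical point which is \emph{not} a relative minimizer; but every critical point of $g$ is a zero of $g$ by the Fermat-plus-invertibility computation, hence a global minimizer --- contradiction. You instead run the classical Mountain Pass Theorem, which forces you to establish the quantitative geometry by hand: the positive separation $\delta>0$ on a small sphere around $x_1$ (which you get from compactness of the sphere, since $x_1$ is an isolated zero) and the Palais--Smale condition (which you get from coercivity, again a finite-dimensional implication, as the paper notes explicitly). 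Both routes are valid here, but the comparison is instructive: the two ingredients you supply by hand are precisely the ones that \emph{fail} in infinite dimensions --- a strict local minimum no longer yields a positive infimum on small spheres, and coercivity no longer implies Palais--Smale --- and this is exactly why the paper structures the finite-dimensional proof around a three-critical-points statement, so that the same template generalizes verbatim (via the Pucci--Serrin theorem, which needs only a strict local minimum) to the Banach space setting of Theorem \ref{MainTheo}. Your version is the more self-contained and elementary in $\mathbb{R}^n$; the paper's version is the one that scales.
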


The proof this theorem uses the approach  sketched above. Of course, Rolle's Theorem cannot be used here.  Instead the following three critical
points theorem is applied.

\begin{theorem}[{Finite-Dimensional MPT, Courant \cite[Theorem 5.2]{jabri}}]
Suppose that a $C^1$-functional $f:\mathbb{R}^n\to\mathbb{R}$ is coercive
and possesses two distinct strict relative minima $x_1$ and
$x_2$. Then $f$ possesses a third critical point $x_3$, distinct from
$x_1$ and $x_2$, which is not a relative minimizer, that is, in every
neighborhood of $x_3$, there exists a point $x$ such that $f(x) <f(x_3) $.
\end{theorem}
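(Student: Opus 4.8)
The plan is to realize $x_3$ as a mountain pass critical point, obtained by a minimax over paths joining $x_1$ to $x_2$, and to exploit coercivity to supply the compactness that in an infinite-dimensional setting would have to be postulated separately. Fix the class of paths $\Gamma = \{\gamma\in C([0,1],\mathbb{R}^n):\gamma(0)=x_1,\ \gamma(1)=x_2\}$ and set
\[
c=\inf_{\gamma\in\Gamma}\ \max_{t\in[0,1]}f(\gamma(t)).
\]
First I would establish the \emph{mountain pass geometry}, namely that $c>c_0:=\max\{f(x_1),f(x_2)\}$. Since $x_1,x_2$ are \emph{strict} relative minima, there is $r>0$ with $\overline{B}(x_1,r)\cap\overline{B}(x_2,r)=\emptyset$ and with $f>f(x_i)$ on the punctured balls; applying the Weierstrass theorem to the compact spheres $\partial B(x_i,r)$ yields $\delta>0$ such that $f\ge f(x_i)+\delta$ there. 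Every $\gamma\in\Gamma$ must cross $\partial B(x_1,r)$ and $\partial B(x_2,r)$, so $\max_t f(\gamma(t))\ge c_0+\delta$, whence $c\ge c_0+\delta>c_0$. In particular any critical point at level $c$ is automatically distinct from $x_1$ and $x_2$.

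Next I would use coercivity to produce compactness. Coercivity makes every sublevel set $\{f\le d\}$ bounded, hence (being closed) compact, so $f$ satisfies the Palais--Smale condition: a sequence along which $f$ is bounded and $\nabla f\to 0$ is bounded, admits a convergent subsequence, and its limit is a critical point. With this the deformation lemma becomes available, and the standard argument shows that $c$ is a critical value. Indeed, if it were not, there would exist $\varepsilon>0$ and a deformation $\eta$ fixing the low sublevel set (in particular fixing $x_1,x_2$, as $f(x_i)=c_0<c$) with $\eta(\{f\le c+\varepsilon\})\subseteq\{f\le c-\varepsilon\}$; applying $\eta$ to a path $\gamma$ with $\max_t f(\gamma(t))\le c+\varepsilon$ would give $\eta\circ\gamma\in\Gamma$ with $\max_t f<c$, contradicting the definition of $c$. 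Hence the critical set $K_c=\{x:\nabla f(x)=0,\ f(x)=c\}$ is nonempty and, by Palais--Smale, compact.

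It remains to force one of these critical points to be a non-minimizer, and I expect this to be the crux. The conceptual reason is topological: since no $\gamma\in\Gamma$ can stay inside $\{f<c\}$, the points $x_1,x_2$ lie in \emph{different} connected components $O_1,O_2$ of the open set $\{f<c\}$. On $\partial O_1$ one has $f\equiv c$, yet every neighborhood of a point $p\in\partial O_1$ meets $O_1$ and thus contains points with $f<c=f(p)$; therefore \emph{no point of $\partial O_1$ can be a local minimizer}. Consequently it suffices to exhibit a critical point on $\partial O_1$. I would argue by contradiction: if $K_c$ consisted only of local minimizers, each such point would have a neighborhood on which $f\ge c$, forcing $K_c$ to be disjoint from, indeed at positive distance from, $\overline{O_1}$ and $\overline{O_2}$.

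The hard part is then to run the descending deformation so that a near-optimal path is pushed strictly below level $c$ while being routed \emph{around} the compact obstacle $K_c$. Because $K_c$ is separated from $\overline{O_1}\cup\overline{O_2}$, the level-$c$ local minima cannot seal the ``pass'' between $O_1$ and $O_2$, so the flow can be steered past them, lowering $\max_t f(\gamma(t))$ below $c$ and contradicting the definition of $c$ once more. This forces $K_c$ to contain a non-minimizer $x_3$, which by the boundary observation lies on $\partial O_1$ and satisfies the required property that every neighborhood contains a point $x$ with $f(x)<f(x_3)$. The delicate quantitative deformation that steers the descending flow past the level-$c$ local minima is where essentially all of the technical work resides.
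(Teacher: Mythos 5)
Your proposal is incomplete at exactly the point where this theorem differs from the ordinary Mountain Pass Theorem. The parts you carry out are sound: the minimax level $c$ over paths, the strict-minimum geometry (via Weierstrass on the compact spheres $\partial B(x_i,r)$; note only the sphere around whichever minimum realizes $c_0$ is needed to get $c\geq c_0+\delta$), the derivation of the Palais--Smale condition from coercivity in finite dimensions, and the deformation argument showing $K_c\neq\emptyset$ and compact. That much already yields a third critical point distinct from $x_1$ and $x_2$. Your topological observations are also correct: $x_1,x_2$ lie in different components $O_1,O_2$ of $\{f<c\}$, $f\equiv c$ on $\partial O_1$, no point of $\partial O_1$ is a relative minimizer, and if every point of $K_c$ were a local minimizer then $K_c$ would be compactly separated from $\overline{O_1}\cup\overline{O_2}$. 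For context: the paper itself does not prove this statement --- it quotes it from Courant via \cite[Theorem 5.2]{jabri} --- and, tellingly, when the authors need the analogous conclusion in the Banach setting they do not rederive it but invoke the Pucci--Serrin three critical points theorem \cite{pucci}, precisely because the ``not a relative minimizer'' conclusion is the nontrivial part.

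The gap is the final step, which you assert rather than prove: ``the flow can be steered past them, lowering $\max_t f(\gamma(t))$ below $c$.'' The natural tool, the deformation lemma avoiding a closed neighborhood $N$ of $K_c$, gives $\eta\bigl(\{f\leq c+\varepsilon\}\setminus N\bigr)\subseteq\{f\leq c-\varepsilon\}$, but it says nothing useful about the portions of a near-optimal path inside $N$: under your contradiction hypothesis $f\geq c$ on $N$, and after deformation those points are only guaranteed to satisfy $f\leq c+\varepsilon$, so the maximum of the deformed path need not drop below $c$ and no contradiction results. To finish one must either reroute the path around $N$ within $\{f\leq c+\varepsilon\}$ --- a connectivity claim that is exactly the hard content, is not automatic (and is literally impossible in dimension one, where the theorem nevertheless holds), and requires an analysis of how the sublevel sets and the flow interact near the level-$c$ minima --- or run the genuinely more delicate two-stage deformation of Pucci--Serrin \cite{pucci}. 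Equivalently, your reduction ``it suffices to exhibit a critical point on $\partial O_1$'' merely relocates the difficulty: the existence of a critical point on $\partial O_1$ is of the same depth as the statement being proved. Since your own text concedes that ``essentially all of the technical work resides'' in this step, the proposal as written is a correct strategic outline with the decisive lemma missing, not a proof.
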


The above theorem inspired Idczak, Skowron and Walczak \cite{SIW} to
obtain a version of it for mappings between a Banach and a Hilbert space,
and it further inspired us to investigate if a Hilbert space can be replaced
by a Banach space \cite{GGS,galewskikoniorczyk}. In all this papers
a Mountain Pass Theorem is used which requires some refined
estimation of auxiliary functional around $0$. In this work we aim to follow
strictly the pattern described above. Since we see that the points $x_1$
and $x_2$ which we use for contradiction are in fact points of strict
local minima (otherwise we arrive at a contradiction with a local
diffeomorphism), we cannot use the classical Mountain Pass Theorem but 
instead the three critical points version due to Pucci and Serrin \cite{pucci}.

We would like to make some remarks at the end of this section. The global
inversion result which we have described has also a simple application as
the solvability tool of nonlinear equations of the following type
\[
f(x) =y
\]
where $f$ satisfies the mentioned assumptions. Since the solvability part is
reached via the Weierstrass Theorem and requires that some auxiliary
functional should have an argument of a minimum, it follows that a lower
semicontinuous weakly differentiable functional which has invertible weak
derivative would be sufficient. Thus we have the following easy tool:

\begin{corollary}
Assume that $f:\mathbb{R}^n\to\mathbb{R}^n$ is a weakly differentiable
mapping such that $\det f'(x) \neq 0$ for any $x\in\mathbb{R}^n$.
Assume that functional $g:\mathbb{R}^n\to\mathbb{R}$ defined by
$g(x) =\| f(x) \| ^{2}$ is lower semicontinuous and coercive.
Then for any fixed $y\in\mathbb{R}^n$ equation $f(x) =y$ has at least one solution.
\end{corollary}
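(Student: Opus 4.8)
The plan is to carry out, in the $n$-dimensional setting, exactly the surjectivity half of the one-variable argument recalled above (injectivity is not claimed here, so no analogue of Rolle's or the Mountain Pass Theorem is needed). Fix $y\in\mathbb{R}^n$ and introduce the shifted auxiliary functional
\[
g_y(x)=\|f(x)-y\|^2 .
\]
First I would verify that $g_y$ inherits both hypotheses imposed on $g=g_0$. Lower semicontinuity is immediate, since composing with the continuous translation $z\mapsto z-y$ and the norm preserves it. For coercivity I would use the reverse triangle inequality $\|f(x)-y\|\geq\|f(x)\|-\|y\|$, which shows that $g_y(x)\to\infty$ whenever $g(x)=\|f(x)\|^2\to\infty$; as $\|y\|$ is a fixed constant, the coercivity of $g$ transfers to $g_y$.

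Next I would invoke the Weierstrass Theorem in the form recalled in Section 2: the sublevel sets $S^d=\{x:g_y(x)\leq d\}$ are bounded by coercivity and closed by lower semicontinuity, hence compact, so $g_y$ attains its infimum at some point $\overline{x}$. In particular $\overline{x}$ is a local minimizer of $g_y$.

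The final step is Fermat's rule at $\overline{x}$. Using weak differentiability of $f$ together with the chain/product rule, $g_y$ is weakly differentiable and its derivative acts on a direction $h$ by
\[
g_y'(\overline{x})h=2\langle f(\overline{x})-y,\,f'(\overline{x})h\rangle=2\langle f'(\overline{x})^*(f(\overline{x})-y),\,h\rangle .
\]
At a minimizer this vanishes for every $h$, so $f'(\overline{x})^*(f(\overline{x})-y)=0$. Since $\det f'(\overline{x})\neq 0$, the transpose $f'(\overline{x})^*$ is invertible as well, whence $f(\overline{x})-y=0$, i.e.\ $f(\overline{x})=y$. The main obstacle is precisely this last step in the \emph{weak} setting: unlike the $C^1$ case, one must justify that weak (Gâteaux) differentiability of $f$ genuinely yields the displayed formula for $g_y'$ and that Fermat's rule remains valid for a merely weakly differentiable functional. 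Once these two points are secured, the invertibility of $f'(\overline{x})$ does all the remaining work and surjectivity follows.
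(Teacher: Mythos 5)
Your proposal takes exactly the route of the paper's own (two-sentence) proof: pass to the shifted functional $g_y$, obtain a minimizer from coercivity plus lower semicontinuity via the Weierstrass argument, then apply Fermat's rule and the chain rule, and use invertibility of the transpose $f'(\overline{x})^{*}$ to conclude $f(\overline{x})=y$. Moreover, the two points you single out as ``the main obstacle'' are in fact the unproblematic ones. Since the outer map $z\mapsto\|z-y\|^{2}$ is $C^{\infty}$ on $\mathbb{R}^n$, the composite of a Fr\'echet differentiable outer map with a G\^ateaux differentiable inner map is G\^ateaux differentiable: for fixed $h$ expand $f(\overline{x}+th)=f(\overline{x})+tf'(\overline{x})h+o(t)$ and insert it into the quadratic to get precisely your formula for $g_y'(\overline{x})h$. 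Fermat's rule for G\^ateaux derivatives is then immediate by restricting $g_y$ to the line $t\mapsto\overline{x}+th$, which has a minimum at $t=0$.

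The step that is genuinely shaky is the one you call ``immediate'': the transfer of lower semicontinuity from $g=\|f(\cdot)\|^{2}$ to $g_y=\|f(\cdot)-y\|^{2}$. Your composition argument does not apply, because lower semicontinuity is a hypothesis on the scalar composite $\|f(\cdot)\|^{2}$, not on $f$ itself, and in dimension $n\geq 2$ a weakly (G\^ateaux) differentiable map may be discontinuous --- which is exactly the regime where the corollary has content (the paper notes it is vacuous for $n=1$). Concretely, take $n\geq 2$ and $f(x)=(1+\|x\|^{2})e_{1}$ for $x\neq 0$, $f(0)=e_{2}$, where $e_{1},e_{2}$ are standard basis vectors: then $\|f(\cdot)\|^{2}$ is lower semicontinuous and coercive, yet $\|f(\cdot)-e_{1}\|^{2}$ fails to be lower semicontinuous at $0$, its infimum $0$ is not attained, and $f(x)=e_{1}$ has no solution. (This $f$ is not weakly differentiable at $0$, so it does not refute the corollary itself, but it does show that lower semicontinuity of $g_0$ alone can never justify lower semicontinuity of $g_y$.) In fairness, the paper's proof makes exactly the same unproven assertion --- ``by our assumption, $\|f(x)-y\|^{2}$ is lower semicontinuous and coercive for any fixed $y$'' --- so the charitable reading of the hypothesis is that lower semicontinuity is meant to hold for every shifted functional; under that reading both your argument and the paper's go through. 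Coercivity, by contrast, does transfer exactly as you say, via the reverse triangle inequality.
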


We see that by our assumption, functional
$g(x) :=\|f(x) -y\| ^{2}$ is lower semicontinuous and coercive for any fixed
$y\in\mathbb{R}^n$. Thus the assertion follows easily. This result has no
meaning in case of $n=1$ since both types of differentiability coincide there.

\section{Some remarks on the infinite setting}

It is well known that, as in the case of $\mathbb{R}^n$ for $n\geq 2$,
there are two basic concepts of differentiability for
operators and functionals. Let $X,Y$ be Banach spaces, and assume that $U$
is an open subset of $X$. A mapping $f:U\to Y$ is said to be
G\^{a}teaux differentiable at $x_0\in U$ if there exists a continuous linear
operator $f_{G}'(x_0):X\to Y$ such that for every $h\in X$
\[
\lim_{t\to 0}\frac{f(x_0+th)-f(x_0)}{t}=f_{G}'(x_0)h.
\]
The operator $f_{G}'(x_0)$ is called the G\^{a}teaux derivative
of $f$ at $x_0$. We will denote it in the sequel by $f'$.

An operator $f:U\to Y$ is said to be Fr\'echet differentiable at $
x_0\in U$ if there exists a continuous linear operator $f'(x_0):X\to Y$
such that
\[
\lim_{\| h\| \to 0}\frac{\| f(x_0+h)-f(x_0)
-f'(x_0)h\| }{\| h\| }=0.
\]
The operator $f'(x_0)$ is called the Fr\'{e}chet derivative of
operator $f$ at $x_0$. When $F$ is Fr\'echet differentiable it is also
continuous and G\^{a}teaux differentiable. A mapping $f$ is continuously
Fr\'echet differentiable if $f':X\ni x\mapsto f'(x)\in
L(X,Y) $ is continuous in the respective topologies. If $f$ is
continuously G\^{a}teaux differentiable then it is also continuously
Fr\'echet differentiable and thus it is called $C^1$. The most common way to
prove the Fr\'{e}chet differentiability is that one shows that $f$ is
continuously G\^{a}teaux differentiable. In fact, for critical point theory
tools either the functional usually must be $C^1$ or locally Lipschitz and
so it is no surprise that G\^{a}teaux differentiability is only an auxiliary
tool.

A continuously Fr\'echet differentiable map $f:X\to Y$ is called a
diffeomorphism if it is a bijection and its inverse $f^{-1}:Y\to X$
is continuously Fr\'echet differentiable as well. Recalling the Inverse
Function Theorem, a continuously Fr\'echet differentiable mapping $
f:X\to Y$ such that for any $x\in X$ the derivative is surjective,
i.e. $f'(x)X=Y$ and invertible, i.e. there exists a constant
$\alpha _{x}>0$ such that
\[
\| f'(x)h\| \geq \alpha _{x}\| h\|
\]
defines a local diffeomorphism. We will write that
$f'(x) \in \operatorname{Isom}(X,Y) $ for mappings with such properties. When
$f'(x) \in \operatorname{Isom}(X,Y) $ for each $x\in X$, this means that for each point
$x$ in $X$, there exists an open set $U$
containing $x$, such that $f(U)$ is open in $Y$ and
$f\big|_{U}:U\to f(U)$ is a diffeomorphism. If $f$ is a diffeomorphism
it obviously defines a
local diffeomorphism. Thus the main problem to be overcome is to make a
local diffeomorphism into a global one.

Let $E$ be a Banach space. A Gateaux differentiable functional
$J:E\to \mathbb{R}$ satisfies the Palais-Smale condition if every
sequence $(u_{n})$ such that $(J(u_{n})) $ is bounded and
$J'(u_{n})\to 0$, has a convergent subsequence. The
following links the Palais-Smale condition with coercivity \cite{jabri}.

\begin{proposition}
Assume that $J\in C^1(E,R)$ is bounded from below and satisfies (PS)
condition. Then $J$ is coercive.
\end{proposition}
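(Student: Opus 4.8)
The plan is to argue by contradiction, producing a Palais--Smale sequence whose norms diverge, which the (PS) condition forbids. Suppose $J$ were not coercive. Then the quantity
\[
c := \liminf_{\|u\| \to \infty} J(u) = \lim_{R \to \infty} \Big( \inf_{\|u\| \ge R} J(u) \Big)
\]
is finite: it is bounded above because non-coercivity supplies a sequence on which $J$ stays bounded, and bounded below by $\inf_E J > -\infty$. Writing $g(R) := \inf_{\|u\| \ge R} J(u)$, the function $g$ is nondecreasing with $g(R) \uparrow c$ and $g(R) \le c$ for every $R$. The goal is to manufacture, for this level $c$, a sequence $(v_n)$ with $J(v_n) \to c$, $J'(v_n) \to 0$ and $\|v_n\| \to \infty$; such a sequence is a Palais--Smale sequence with no convergent subsequence, contradicting (PS).

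First I would fix $\varepsilon_n = 1/n$ and radii $R_n \to \infty$ chosen so that $g(R_n) > c - \varepsilon_n$, and pick an almost-minimizer $u_n$ on the closed set $X_n := \{u \in E : \|u\| \ge R_n\}$ with $\|u_n\| \ge R_n + 1$ and $J(u_n) \le \inf_{X_n} J + \varepsilon_n$. Since $(X_n, \|\cdot\|)$ is a complete metric space and $J$ is continuous and bounded below on it, Ekeland's variational principle (with parameter $\lambda = \sqrt{\varepsilon_n}$) yields a point $v_n \in X_n$ satisfying $J(v_n) \le J(u_n)$, $\|v_n - u_n\| \le \sqrt{\varepsilon_n}$, and the almost-minimality inequality $J(v_n) \le J(w) + \sqrt{\varepsilon_n}\,\|w - v_n\|$ for all $w \in X_n$. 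The squeeze $c - \varepsilon_n < g(R_n) \le J(v_n) \le J(u_n) \le c + \varepsilon_n$ then forces $J(v_n) \to c$, while $\|v_n\| \ge \|u_n\| - \sqrt{\varepsilon_n} \ge R_n + 1 - \sqrt{\varepsilon_n} \to \infty$.

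It remains to convert the almost-minimality inequality into the gradient estimate $\|J'(v_n)\| \le \sqrt{\varepsilon_n}$, and this is the main obstacle: Ekeland's inequality only tests competitors $w$ that keep $\|w\| \ge R_n$, so if $v_n$ sat on the boundary sphere $\|u\| = R_n$ we would obtain only a one-sided condition, insufficient to control the full Fr\'echet derivative. The point of demanding $\|u_n\| \ge R_n + 1$ is precisely to rule this out: because $\|v_n - u_n\| \le \sqrt{\varepsilon_n} \to 0$, for $n$ large we have $\|v_n\| > R_n$, so $v_n$ lies in the interior of $X_n$. Testing $w = v_n + t h$ for small $t$ and arbitrary $h$ with $\|h\| = 1$ (which keeps $w \in X_n$) and letting $t \to 0^{\pm}$ then gives $|\langle J'(v_n), h\rangle| \le \sqrt{\varepsilon_n}$, hence $\|J'(v_n)\| \le \sqrt{\varepsilon_n} \to 0$. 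Thus $(v_n)$ is a Palais--Smale sequence with $J(v_n)$ bounded yet $\|v_n\| \to \infty$, so it admits no convergent subsequence, contradicting (PS). Therefore $J$ must be coercive.
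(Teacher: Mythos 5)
Your proof is correct. Note that the paper itself gives no proof of this proposition: it is stated with a citation to Jabri's book, where the result (originally due to Caklovic, Li and Willem) is proved by essentially the argument you give, namely: assuming non-coercivity, the finite level $c=\lim_{R\to\infty}\inf_{\|u\|\ge R}J(u)$ is used to manufacture, via Ekeland's variational principle on the exterior sets $X_n=\{u\in E:\|u\|\ge R_n\}$, a Palais--Smale sequence with divergent norms, which (PS) forbids. You also correctly identify and dispose of the one delicate point, that Ekeland's inequality only tests competitors staying in $X_n$, by arranging for $v_n$ to lie in the interior of $X_n$ so that two-sided variations are admissible. The single step you should make explicit is the existence of an almost-minimizer $u_n$ of $J$ on $X_n$ with the additional property $\|u_n\|\ge R_n+1$: this is not automatic for an arbitrary almost-minimizer (they could all sit near the boundary sphere), but it does follow in one line from your setup, since $\inf_{\|u\|\ge R_n+1}J(u)=g(R_n+1)\le c<g(R_n)+\varepsilon_n$, where the last inequality uses your choice $g(R_n)>c-\varepsilon_n$; hence some point of norm at least $R_n+1$ already beats the threshold $\inf_{X_n}J+\varepsilon_n$. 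With that sentence added, the argument is complete.
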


The \emph{converse statement} (coercivity implying the Palais-Smale
condition) is valid only in a finite-dimensional space. We note that with
the aid of the Palais-Smale condition, we can formulate a version of the
critical point theorem which serves as a counterpart of a direct method in
the calculus of variation. Namely, it can be applied when a functional is
not weakly l.s.c. or else we can consider it as some generalization of the
Weierstrass Theorem.

\begin{theorem}[{\cite[Proposition 10.1]{jabri}}] \label{CPT}
Let $E$ be a Banach space and $J:E\to \mathbb{R}$ a $C^1$-functional
which satisfies the Palais-Smale condition. Suppose in addition that $J$
is bounded from below. Then the infimum of $J$ is achieved at some point
$u_0\in E$ and $u_0$ is a critical point of $J$, i.e. $J'(u_0)=0$.
\end{theorem}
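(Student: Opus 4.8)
The plan is to combine a minimizing-sequence argument with Ekeland's variational principle, so that the Palais--Smale condition can be invoked. Since $J$ is bounded from below, the quantity $m := \inf_{u \in E} J(u)$ is a finite real number, and a minimizing sequence certainly exists. The difficulty is that an arbitrary minimizing sequence carries no information about $J'$, so the Palais--Smale condition cannot be applied to it directly. The key idea is therefore to produce a \emph{special} minimizing sequence $(u_n)$ which additionally satisfies $J'(u_n) \to 0$; once this is in hand, the hypotheses of the (PS) condition are met.

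To construct such a sequence I would appeal to Ekeland's variational principle. Since $E$ is a Banach space it is a complete metric space, and $J$, being $C^1$, is in particular lower semicontinuous and bounded below, so the principle applies. Fixing a sequence $\varepsilon_n \downarrow 0$ and choosing for each $n$ a point at which $J$ is within $\varepsilon_n$ of its infimum, Ekeland's principle applied with parameter $\lambda = \sqrt{\varepsilon_n}$ yields a point $u_n$ with $m \leq J(u_n) \leq m + \varepsilon_n$ satisfying the variational inequality $J(w) \geq J(u_n) - \sqrt{\varepsilon_n}\,\|w - u_n\|$ for every $w \in E$.

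The derivative estimate is then extracted by a standard perturbation. Setting $w = u_n + t h$ with $\|h\| = 1$ and $t > 0$, dividing by $t$, and letting $t \to 0^+$ gives $\langle J'(u_n), h\rangle \geq -\sqrt{\varepsilon_n}$; replacing $h$ by $-h$ and taking the supremum over the unit sphere yields $\|J'(u_n)\| \leq \sqrt{\varepsilon_n} \to 0$. Hence $(u_n)$ is a minimizing sequence with $J(u_n) \to m$ and $J'(u_n) \to 0$, exactly the ingredients the Palais--Smale condition requires.

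Finally, the (PS) condition closes the argument. Because $(J(u_n))$ is bounded (it converges to $m$) and $J'(u_n) \to 0$, there is a subsequence $u_{n_k} \to u_0$ for some $u_0 \in E$. Continuity of $J$ gives $J(u_0) = \lim_k J(u_{n_k}) = m$, so the infimum is attained at $u_0$; continuity of $J'$, which holds since $J$ is $C^1$, gives $J'(u_0) = \lim_k J'(u_{n_k}) = 0$, so $u_0$ is a critical point. The main obstacle is the construction in the second step: without Ekeland's principle there is no mechanism forcing the gradients along a minimizing sequence to vanish, and it is precisely this device that converts the merely qualitative Palais--Smale hypothesis into a usable tool.
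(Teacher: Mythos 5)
Your proof is correct. The paper does not prove this statement at all --- it is quoted verbatim from the literature (Proposition 10.1 in Jabri's book) --- and your argument, combining Ekeland's variational principle with the Palais--Smale condition to upgrade a minimizing sequence into one with $J'(u_n)\to 0$ and then extracting a convergent subsequence, is exactly the standard proof of that cited result, with all steps (the choice $\lambda=\sqrt{\varepsilon_n}$, the directional-derivative estimate, and the passage to the limit using continuity of $J$ and $J'$) carried out correctly.
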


Now we have enough background to present the main result from \cite{SIW}.
They used the Mountain Pass Lemma (which can be cited for example in the
following form).

\begin{theorem}[Mountain Pass Theorem] \label{MPT_1}
 Let $E$ be a Banach space and assume that $J\in C^1(E, \mathbb{R})$
satisfies the Palais-Smale condition. Assume that
\begin{equation}
\inf_{\| x\| =r}J(x)\geq \max \{J(0),J(e)\}, \label{estimsphere}
\end{equation}
where $0<r<\| e\| $ and $e\in E$. Then $J$ has a non-zero
critical point $x_0$. If moreover, $\inf_{\| x\|=r}J(x)>\max \{J(0),J(e)\}$,
then  $x_0\neq e$.
\end{theorem}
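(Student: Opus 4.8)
The plan is to realize the desired critical point as a minimax value over paths joining $0$ to $e$, in the classical manner of Ambrosetti and Rabinowitz. First I would introduce the family of admissible paths
\[
\Gamma=\{\gamma\in C([0,1],E):\gamma(0)=0,\ \gamma(1)=e\}
\]
and the minimax level
\[
c=\inf_{\gamma\in\Gamma}\max_{t\in[0,1]}J(\gamma(t)).
\]
The first step is the geometric lower bound $c\geq\inf_{\|x\|=r}J(x)$. This rests on a simple connectedness observation: for any $\gamma\in\Gamma$ the real function $t\mapsto\|\gamma(t)\|$ is continuous, equals $0$ at $t=0$ and $\|e\|>r$ at $t=1$, so by the intermediate value theorem there is $t_\gamma$ with $\|\gamma(t_\gamma)\|=r$; hence $\max_t J(\gamma(t))\geq J(\gamma(t_\gamma))\geq\inf_{\|x\|=r}J$. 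Taking the infimum over $\gamma$ gives the bound, and in particular $c\geq\max\{J(0),J(e)\}$ by hypothesis \eqref{estimsphere}.

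The heart of the argument is to show that $c$ is a critical value of $J$, and I would argue this by contradiction using the Deformation Lemma, which is exactly where the Palais--Smale condition and the $C^1$ regularity enter. Assuming $c$ is regular, the (PS) condition upgrades the absence of critical points at level $c$ to a uniform lower bound on $\|J'\|$ in a thin strip $\{|J-c|\leq\varepsilon\}$, and the deformation then produces, for a suitable small $\varepsilon>0$, a continuous map $\eta:E\to E$ that pushes the sublevel set $\{J\leq c+\varepsilon\}$ into $\{J\leq c-\varepsilon\}$ while leaving fixed every point at which $J$ is bounded away from $c$. Since in the strict case $J(0),J(e)\leq\max\{J(0),J(e)\}<c$, the endpoints $0$ and $e$ are fixed by $\eta$. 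Choosing a near-optimal path $\gamma$ with $\max_t J(\gamma(t))\leq c+\varepsilon$ and replacing it by $\eta\circ\gamma\in\Gamma$ yields a path along which $J$ stays below $c-\varepsilon$, contradicting the definition of $c$. Hence $c$ is a critical value; any critical point $x_0$ at level $c$ then satisfies $J(x_0)=c>\max\{J(0),J(e)\}$, which forces $x_0\neq0$ and $x_0\neq e$ simultaneously, establishing the \emph{moreover} assertion.

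In a Banach (as opposed to Hilbert) space the gradient of $J$ is not directly available, so the construction of $\eta$ must be carried out through a pseudo-gradient vector field and its flow; I regard assembling this deformation as the main technical obstacle, all subsequent steps being comparatively soft. Finally I would treat the degenerate borderline case in which $c=\max\{J(0),J(e)\}=\inf_{\|x\|=r}J(x)$, where the critical point produced at level $c$ could a priori coincide with $0$ or $e$. In that situation the equality $c=\inf_{\|x\|=r}J$ localizes the minimax on the sphere $\{\|x\|=r\}$, and a refined deformation argument restricted to a neighborhood of that sphere yields a critical point lying on it; being of norm $r>0$ it is automatically nonzero, which secures the existence of a nonzero critical point in all cases.
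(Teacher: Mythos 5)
The paper itself does not prove this statement: Theorem \ref{MPT_1} is quoted as known background (``the Mountain Pass Lemma, which can be cited for example in the following form''), and indeed for its own results the paper deliberately bypasses it in favour of the Pucci--Serrin three critical points theorem (Theorem \ref{theorem_pucci}), precisely to avoid estimates on spheres. So there is no in-paper proof to compare against, and your argument must be judged on its own. Your minimax scheme is the classical Ambrosetti--Rabinowitz one and it is correct whenever $c:=\inf_{\gamma\in\Gamma}\max_{t}J(\gamma(t))>\max\{J(0),J(e)\}$: the intermediate value argument gives $c\geq\inf_{\|x\|=r}J(x)$, the pseudo-gradient deformation (where (PS) and $C^1$ enter) gives the contradiction with near-optimal paths, and a critical point at level $c$ is automatically distinct from $0$ and from $e$. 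In particular this disposes of the ``moreover'' assertion completely, since strict inequality in \eqref{estimsphere} forces $c>\max\{J(0),J(e)\}$.

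The place where your proposal compresses all the real content is the borderline case $c=\max\{J(0),J(e)\}$, which by the squeeze $c\geq\inf_{\|x\|=r}J\geq\max\{J(0),J(e)\}$ is exactly the case $c=\inf_{\|x\|=r}J$. This case is the whole point of allowing a non-strict inequality in \eqref{estimsphere}, and your treatment of it is one sentence asserting what is in substance the Pucci--Serrin/Ghoussoub--Preiss located mountain pass theorem. Note that the classical contradiction is structurally unavailable there: one cannot push a near-optimal path below level $c-\varepsilon$, because every admissible path must cross the sphere $S_r=\{x:\|x\|=r\}$, on which $J\geq c$. The argument that does work (Willem's quantitative-deformation proof of the Ghoussoub--Preiss theorem) uses a deformation $\eta$ supported in a small neighbourhood of $S_r$ together with a displacement bound $\|\eta(u)-u\|\leq\delta$; the endpoints $0$ and $e$ are then fixed for metric reasons (they are at distance at least $\min\{r,\|e\|-r\}$ from $S_r$), not for level reasons, and the contradiction comes from a single point rather than from the whole path: if the deformed path $\tilde\gamma=\eta\circ\gamma$ meets $S_r$ at $\tilde\gamma(t^*)$, then the original point $\gamma(t^*)$ lies within $\delta$ of $S_r$ and satisfies $J(\gamma(t^*))\leq c+\varepsilon$, so the deformation sends it to level at most $c-\varepsilon$, contradicting $J\geq c$ on $S_r$. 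Negating the uniform bound on $\|J'\|$ for a sequence $\varepsilon_n\to 0$ (with $\delta<r/2$ fixed) and invoking (PS) then yields a critical point at level $c$ whose norm is at least $r-2\delta>0$, hence nonzero. So your plan is sound and can be completed, but as written the hardest step is asserted rather than proved, and your parenthetical judgement that the pseudo-gradient construction is ``the main technical obstacle'' is misplaced: that part is textbook material, while the borderline case is the genuinely delicate one.
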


In \cite{SIW}, using ideas contained in the proof of Theorem
\ref{MainTheo copy(2)} (see again \cite{jabri} for some nice version of the
proof or  our introductory remarks), the authors proved the following result  
concerning diffeomorphism between a Banach and a Hilbert space.

\begin{theorem} \label{TheoSW}
Let $X$ be a real Banach space and $H$ a real Hilbert space.
If $f:X\to H$ is a $C^1$-mapping such that:
\begin{itemize}
\item for any $y\in H$ the functional $\varphi :X\to \mathbb{R}$
given by the formula
\[
\varphi (x) =\frac{1}{2}\| f(x)
-y\| ^{2}
\]
satisfies the Palais-Smale condition,

\item  for any $x\in X$, $f'(x)\in \operatorname{Isom}(X,H) $,
\end{itemize}
then $f$ is a diffeomorphism.
\end{theorem}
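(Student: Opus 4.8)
The plan is to follow the scheme outlined in the introduction, adapting the proof of Theorem \ref{MainTheo copy(2)} to the infinite-dimensional setting, with the Finite-Dimensional MPT replaced by the Mountain Pass Theorem (Theorem \ref{MPT_1}). Since $f'(x)\in\operatorname{Isom}(X,H)$ for every $x\in X$, the Inverse Function Theorem already guarantees that $f$ is a local diffeomorphism; in particular the local $C^1$ inverses agree on overlaps and patch together, so once $f$ is shown to be a bijection its global inverse $f^{-1}$ is automatically $C^1$. Thus the whole task reduces to proving that $f$ is onto and one-to-one.

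For surjectivity, fix $y\in H$ and consider $\varphi(x)=\tfrac12\|f(x)-y\|^2$. This functional is nonnegative, hence bounded from below, it is $C^1$ because $f$ is $C^1$ and the Hilbert norm is smooth, and it satisfies the Palais--Smale condition by hypothesis. Theorem \ref{CPT} then produces a global minimizer $u_0$ with $\varphi'(u_0)=0$. Computing the derivative gives
\[
\varphi'(u_0)h=\langle f(u_0)-y,\;f'(u_0)h\rangle=0\qquad\text{for all }h\in X,
\]
and since $f'(u_0)$ maps $X$ onto $H$, the vector $f(u_0)-y$ is orthogonal to all of $H$, forcing $f(u_0)=y$. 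Hence $f$ is onto.

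For injectivity I would argue by contradiction: suppose $f(x_1)=f(x_2)=y$ with $x_1\neq x_2$. Both $x_1$ and $x_2$ are then zeros of $\varphi$, hence global minima. Because $f$ is a local diffeomorphism at each point, the equation $f(x)=y$ has an isolated solution near $x_1$ and near $x_2$, so each of $x_1,x_2$ is in fact a \emph{strict} local minimizer of $\varphi$. After translating by $x_1$ (replacing $\varphi$ by $\psi(x)=\varphi(x+x_1)$ and setting $e=x_2-x_1$), I am in the situation $\psi(0)=\psi(e)=0$ with both points strict local minima, which is exactly the geometry required by Theorem \ref{MPT_1}. The Mountain Pass Theorem then yields a critical point at the mountain-pass level $c=\inf_{\gamma}\max_{t}\psi(\gamma(t))\geq\inf_{\|x\|=r}\psi>0$. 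On the other hand, every critical point $u$ of $\varphi$ (equivalently of $\psi$) satisfies $f(u)=y$ by the same orthogonality computation as above, and therefore has critical value $0$. This contradicts $c>0$, so no two distinct preimages can exist and $f$ is injective, completing the proof that $f$ is a diffeomorphism.

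The step I expect to be the main obstacle is verifying the separating estimate $\inf_{\|x\|=r}\psi>\max\{\psi(0),\psi(e)\}=0$ for a suitable small radius $r\in(0,\|e\|)$. In infinite dimensions the sphere $\{\|x\|=r\}$ is not compact, so positivity of $\psi$ there cannot be read off from continuity and the strictness of the minimum alone. The natural remedy is to use the quantitative local invertibility bound: from $\|f'(x)h\|\geq\alpha_x\|h\|$ together with the continuity of $f'$ one obtains, on a small ball around $x_1$, a uniform estimate $\|f(x+x_1)-y\|\geq c\|x\|$ with $c>0$, whence $\psi(x)\geq\tfrac{c^2}{2}\|x\|^2$ and the infimum over the sphere is at least $\tfrac{c^2}{2}r^2>0$. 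Making this local lower bound precise and uniform, so that the mountain-pass geometry of Theorem \ref{MPT_1} genuinely applies, is the crux of the argument.
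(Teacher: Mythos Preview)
Your surjectivity argument matches the paper's exactly. For injectivity, however, the paper proceeds differently: instead of the classical Mountain Pass Theorem~\ref{MPT_1}, it derives Theorem~\ref{TheoSW} as the special case $\eta(x)=\tfrac12\|x\|^2$ of Theorem~\ref{MainTheo}, whose proof invokes the Pucci--Serrin three critical points result (Theorem~\ref{theorem_pucci}). That tool requires only that $0$ be a \emph{strict local minimum} of $\psi$ and that $\psi(e)\le\psi(0)$; it then produces a critical point with value strictly above $\psi(0)=0$, and the contradiction follows with no sphere estimate whatsoever.

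Your route via Theorem~\ref{MPT_1} is essentially the original argument of \cite{SIW}, and it is correct: the local lower bound $\|f(x+x_1)-y\|\ge c\|x\|$ can indeed be obtained from the continuity of $f'$ by writing $f(x+x_1)-f(x_1)=f'(x_1)x+\int_0^1\bigl(f'(x_1+tx)-f'(x_1)\bigr)x\,dt$ and choosing $r$ small enough that the integral term is dominated by $\tfrac12\alpha_{x_1}\|x\|$. So you recover the separating inequality $\inf_{\|x\|=r}\psi\ge\tfrac{c^2}{2}r^2>0$. The trade-off is exactly what the paper emphasizes: the Pucci--Serrin approach buys a shorter, estimate-free proof (and, in the generalization to a Banach target with an auxiliary functional $\eta$, it dispenses with the extra growth hypothesis~(A4) that the sphere estimate would otherwise force), whereas your approach stays closer to the classical mountain-pass geometry but must work for the quantitative bound you correctly flagged as the crux.
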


The question arises  whether the Hilbert space $H$ in the
formulation of the above theorem can be replaced by a Banach space.
 This question is of some importance since one would expect diffeomorphism to act
between two Hilbert spaces or else two Banach spaces rather than between a
Hilbert and a Banach space. The applications given in \cite{SIW} work when
both $X$ and $H$ are Hilbert spaces.
 We see that given a Hilbert space $H$, the relation
$x\mapsto \frac{1}{2}\| x\| ^{2}$ can be
treated as $x\mapsto \frac{1}{2}\langle x,x\rangle $, where
$\langle \cdot ,\cdot \rangle $ is the scalar product. The other
point of view is to treat $x\mapsto \frac{1}{2}\| x\| ^{2}$
as a potential of a duality mapping between $H$ and $H^{\ast }$ and then
look at the composition of identity with some $C^1$-functional which is zero
only at $0$ and with derivative sharing the same property. These
observations allow us to easily generalize the mentioned result to a more
general setting.

\subsection{An approach by an auxiliary functional}

The version of the Mountain Pass Theorem which we use is sort of a
counterpart of a three critical points theorem which is actually what is
applied for the proof of Theorem \ref{MainTheo copy(2)}.

\begin{theorem}[{\cite[Theorem 2]{pucci}}] \label{theorem_pucci}
Let $X$ be a Banach space and let $J:X\to \mathbb{R}$ be a $C^1$-functional
satisfying the Palais-Smale condition with $0_{X}$ its strict local minimum.
If there exists $e\neq 0_{X}$ such that $J(e)\leqslant J(0_{X})$, then there is a
critical point $\bar{x}$ of $J$, with $J(\bar{x})>J(0_{X})$, which is not a
local minimum.
\end{theorem}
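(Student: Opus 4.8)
The plan is to obtain $\bar{x}$ as a critical point at a mountain-pass minimax level for paths joining the strict local minimum $0_{X}$ to $e$, and then to upgrade ``critical point'' to ``not a local minimum''. Since $0_{X}$ is a \emph{strict} local minimum and $e\neq 0_{X}$, I would first fix $r$ with $0<r<\|e\|$ so small that $J(x)>J(0_{X})$ whenever $0<\|x\|\leq r$. Writing $\Gamma=\{\gamma\in C([0,1],X):\gamma(0)=0_{X},\ \gamma(1)=e\}$, I set
\[
c=\inf_{\gamma\in\Gamma}\ \max_{t\in[0,1]}J(\gamma(t)).
\]
Every $\gamma\in\Gamma$ joins a point of norm $0$ to a point of norm exceeding $r$, hence meets the sphere $\{\|x\|=r\}$; therefore $c\geq\inf_{\|x\|=r}J(x)\geq J(0_{X})$, while the straight segment $t\mapsto te$ gives $c<\infty$. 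The Palais--Smale hypothesis is exactly the ingredient that lets me run the deformation lemma at the level $c$, and it is the engine for both cases below.

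If $c>J(0_{X})\geq J(e)$, then $c>\max\{J(0_{X}),J(e)\}$ and the Mountain Pass Theorem~\ref{MPT_1} already supplies a critical point $\bar{x}\neq 0_{X},e$ with $J(\bar{x})=c>J(0_{X})$. It remains to see that $\bar{x}$ may be chosen not to be a local minimum. By (PS) the critical set $K_{c}=\{x:J(x)=c,\ J'(x)=0\}$ is compact, and I would invoke the standard refinement of the mountain-pass construction (the characterisation of mountain-pass critical points, obtained from the quantitative deformation lemma): if every point of $K_{c}$ were a local minimum, the level-$c$ ``neck'' separating the sublevel components of $0_{X}$ and of $e$ could be deformed away, so a near-optimal path could be pushed strictly below $c$, contradicting the definition of $c$. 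Hence some critical point at level $c$ is of mountain-pass type and in particular not a local minimum; this is the desired $\bar{x}$.

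The real difficulty, and the step I expect to be the main obstacle, is the \emph{degenerate case} $c=J(0_{X})$. In the finite-dimensional setting this case cannot occur: the sphere $\{\|x\|=r\}$ is compact, so the pointwise inequality $J>J(0_{X})$ on it forces $\inf_{\|x\|=r}J>J(0_{X})$, and one is always in the situation above (this is precisely why Courant's three-critical-point theorem suffices in $\mathbb{R}^{n}$). In an infinite-dimensional Banach space the sphere is not compact, so the separating ``ring'' of values may collapse down to the level $J(0_{X})$, the plain Mountain Pass Theorem no longer detects a critical point strictly above the minimum, and the connectivity argument of the previous paragraph breaks. The strictness of the minimum at $0_{X}$ must then be exploited more delicately: locally the sublevel set $\{J\leq J(0_{X})\}$ reduces to the single point $0_{X}$, which pins the low end of every near-optimal path, and one needs a refined (Pucci--Serrin type) deformation argument that converts this pointwise strictness into a genuine topological separation of sublevel sets, so as to force the existence of a saddle with value strictly above $J(0_{X})$. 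Carrying out this conversion carefully, while simultaneously guaranteeing both $J(\bar{x})>J(0_{X})$ and that $\bar{x}$ fails to be a local minimum, is the technical heart of the statement and the part I would spend the most effort on.
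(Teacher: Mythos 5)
First, a point of reference: the paper does not prove this statement at all --- it is quoted from Pucci--Serrin \cite[Theorem 2]{pucci} and used as a black box in the proof of Theorem \ref{MainTheo}. So your attempt must stand on its own as a proof of the cited result, and judged that way it has two genuine gaps, located exactly at the two steps you label ``standard'' and ``the technical heart''. In the non-degenerate case $c>J(0_X)$, Theorem \ref{MPT_1} does give a critical point at the minimax level $c$, but the claim that the critical set $K_c=\{x: J(x)=c,\ J'(x)=0\}$ must contain a point which is \emph{not} a local minimum is not a corollary of the Mountain Pass Theorem; it is itself a Pucci--Serrin/Hofer structure theorem, i.e.\ essentially the statement you are asked to prove, so invoking it as a ``standard refinement'' is close to circular. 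Your one-line justification (``the neck could be deformed away, so a near-optimal path could be pushed strictly below $c$'') skips what makes the argument work: by (PS) the set $K_c$ is compact; if every point of $K_c$ were a local minimum, one gets a uniform closed neighborhood $N$ of $K_c$ on which $J\geq c$; one then needs a \emph{quantitative} deformation lemma with a displacement bound small compared to the width of $N$, so that the deformed path has values in $(-\infty,c-\epsilon]\cup[c,+\infty)$; finally a connectedness argument (the continuous function $J\circ\tilde\gamma$ starts below the gap $(c-\epsilon,c)$ and cannot jump across it) forces the whole deformed path below $c$, contradicting the definition of $c$. Without displacement control and this connectedness step the argument fails pointwise: near a non-strict local minimum (e.g.\ a plateau inside $K_c$) paths can sit at level exactly $c$, so ``pushing strictly below $c$'' is simply not available.

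The second gap is the degenerate case $c=J(0_X)$, which you correctly identify as nonvacuous in infinite dimensions but then do not treat --- you only describe what a treatment would have to achieve. The resolution is that the strictness of the minimum \emph{rules this case out}, via a localized deformation on an annulus: choose $\rho<\|e\|$ with $J>J(0_X)$ on $\{0<\|x\|\leq\rho\}$ and set $A=\{\rho/2\leq\|x\|\leq\rho\}$. If $c$ were equal to $J(0_X)$, then (PS) together with the pointwise strict inequality on the compact-in-level sense (any (PS) sequence in a neighborhood of $A$ at level $c$ would converge to a point of $A$ with value $c$, impossible) yields $\|J'\|\geq\sigma>0$ on a neighborhood of $A$ intersected with $\{|J-c|\leq 2\epsilon\}$; a deformation with displacement less than the annulus half-width then maps the point where a near-optimal path crosses the middle sphere of $A$ to a point still lying in $A$ but with value at most $c-\epsilon$, contradicting $J>c$ on $A$. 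Hence $c>J(0_X)$ always, and one lands back in the first case. Since you neither rule out nor resolve the degenerate case, and the first case is asserted rather than proved, the proposal is a correct strategic outline but not a proof.
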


These observations will lead us towards obtaining the counterpart of Theorem
\ref{TheoSW} in a Banach space setting as well as related implicit function
results in the spirit of our introductory remarks. The application of
Theorem \ref{MPT_1} requires making an estimate of the action functional on
the sphere centered at $0$, while the application of Theorem
\ref{theorem_pucci} yields that some points define local minima. But the latter
property is easy to check since this functional is nonnegative and
being zero at some set other that an isolated singleton means that the local
invertibility is violated. Our main result now reads as follows
and it is based on the main result from \cite{GGS} which we improve by
getting rid of one of the assumptions and by using a suitably shortened
proof.

\begin{theorem} \label{MainTheo}
Let $X$, $Y$ be real Banach spaces. Assume that
$f:X\to Y$ is a $C^1$-mapping, $\eta :Y\to\mathbb{R}_{+}$ is a
$C^1$-functional and that the following conditions hold
\begin{itemize}
\item[(A1)] $(\eta (x) =0 \Leftrightarrow x=0) $ and
$(\eta '(x) =0 \Leftrightarrow x=0)$;

\item[(A2)] for any $y\in Y$ the functional $\varphi :X\to\mathbb{R}$
given by the formula
\[
\varphi (x) =\eta (f(x) -y)
\]
satisfies the Palais-Smale condition;

\item[(A3)] $f'(x)\in \operatorname{Isom}(X,Y) $ for any $x\in X$.
\end{itemize}
Then $f$ is a diffeomorphism.
\end{theorem}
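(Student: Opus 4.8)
The plan is to follow the three-step pattern laid out in the introduction: first record that $f$ is a local diffeomorphism, then prove surjectivity by minimizing the auxiliary functional, and finally prove injectivity by contradiction using the three critical points Theorem \ref{theorem_pucci}. The single computation that drives both the surjectivity and the injectivity arguments is the chain-rule identity
\[
\varphi'(x) = \eta'\bigl(f(x)-y\bigr)\circ f'(x).
\]
Since (A3) guarantees that $f'(x)$ is an isomorphism, $\varphi'(x)=0$ forces $\eta'(f(x)-y)=0$, and then (A1) forces $f(x)-y=0$. Consequently \emph{every} critical point $\bar{x}$ of $\varphi$ satisfies $f(\bar x)=y$, whence $\varphi(\bar x)=\eta(0)=0$ is the global minimum value. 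This observation --- that all critical points of $\varphi$ are global zero-minimizers --- is what makes the proof short.

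For local invertibility I would invoke the Inverse Function Theorem as recalled before the statement: (A3) says $f'(x)\in\operatorname{Isom}(X,Y)$ for every $x$, so $f$ is a $C^1$ local diffeomorphism. For surjectivity, fix $y\in Y$ and consider $\varphi(x)=\eta(f(x)-y)$. Since $\eta$ takes values in $\mathbb{R}_{+}$, the functional $\varphi$ is bounded below by $0$, and by (A2) it satisfies the Palais--Smale condition; hence Theorem \ref{CPT} yields a point $x_0$ at which the infimum is attained and $\varphi'(x_0)=0$. By the identity above, $f(x_0)=y$, so $f$ is onto.

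For injectivity, suppose $f(x_1)=f(x_2)=y$ with $x_1\neq x_2$. Then $\varphi(x_1)=\varphi(x_2)=0$, so both points are global minimizers. Using the local diffeomorphism property I would first check that each $x_i$ is a \emph{strict} local minimum of $\varphi$: on a neighborhood $U$ of $x_i$ on which $f$ is injective we have $f(x)\neq y$ for $x\in U\setminus\{x_i\}$, and then (A1) gives $\varphi(x)=\eta(f(x)-y)>0$. Translating so that $x_1$ plays the role of $0_X$ --- that is, applying Theorem \ref{theorem_pucci} to $J(x):=\varphi(x_1+x)$ with $e:=x_2-x_1\neq 0_X$ and $J(e)=0=J(0_X)$ --- produces a critical point $\bar{x}$ of $J$ with $J(\bar{x})>J(0_X)=0$ that is not a local minimum. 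But the identity above shows that every critical point of $J$ has value $0$ and is a global minimum, a contradiction. Hence $f$ is injective, and being a bijective local diffeomorphism it is a global diffeomorphism.

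The main obstacle I anticipate is the injectivity step, and specifically the verification that $x_1$ is a \emph{strict} local minimum so that Theorem \ref{theorem_pucci} genuinely applies; this is exactly the place where the local diffeomorphism hypothesis (A3) is used a second time, beyond its role in the chain-rule reduction. Everything else --- the chain rule, the nonnegativity of $\eta$, and the translation invariance of the Palais--Smale condition --- is routine, so the improvement over Theorem \ref{TheoSW} comes precisely from replacing the Hilbert-norm potential by a general $\eta$ satisfying (A1), which is all that the critical-point reduction actually requires.
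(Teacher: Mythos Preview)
Your proposal is correct and follows essentially the same route as the paper: local diffeomorphism from (A3), surjectivity via Theorem \ref{CPT} applied to $\varphi$, and injectivity by contradiction using Theorem \ref{theorem_pucci} after translating one of the two preimages to the origin and verifying it is a strict local minimum via the local injectivity of $f$. The only cosmetic difference is that the paper translates $x_2$ to the origin with $\psi(x)=\eta(f(x+x_2)-f(x_1))$ and $e=x_1-x_2$, whereas you translate $x_1$; the chain-rule reduction and the contradiction are identical.
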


\begin{proof}
We follow some ideas from \cite{MMR} with the necessary
modifications. In view of the remarks made above, condition (A3) implies
that $f$ is a local diffeomorphism. Thus it is sufficient to show that $f$
is onto and one-to-one.

First, we show that $f$ is onto.
Let us fix any point $y\in Y$.
Observe that $\varphi $ is a composition of two $C^1-$mappings, thus
$\varphi \in C^1(X,\mathbb{R})$. Moreover, $\varphi $ is bounded from
 below and it satisfies the
Palais-Smale condition. Thus it follows from Theorem \ref{CPT} that there
exists at least one argument of a minimum which we denote by $\overline{x}$.
We see by the chain rule for Fr\'{e}chet derivatives and by Fermat's
Principle that
\[
\varphi '(\overline{x})=\eta '(f(\overline{x}
) -y)\circ f'(\overline{x})=0.
\]
Since by (A3) mapping $f'(\overline{x})$ is invertible,
we see that $\eta '(f(\overline{x}) -y)=0$. Now it
follows by (A1) that
\[
f(\overline{x}) -y=0.
\]
Thus $f$ is surjective.

Now we argue by contradiction that $f$ is one-to-one.
Suppose there are $x_1$ and $x_2$, $x_1\neq x_2$, $x_1$, $x_2\in X$,
such that $f(x_1) =f(x_2) $. We will apply
Theorem \ref{theorem_pucci}. Thus we put $e=x_1-x_2$ and define
functional $\psi :X\to \mathbb{R}$ by the  formula
\[
\psi (x) =\eta (f(x+x_2) -f(x_1)) .
\]
By (A2), functional $\psi $ satisfies the Palais-Smale condition. Next, we
see by a direct calculation that $\psi (e) =\psi (0)=0$. Moreover, $0$
is a strict local minimum of $\psi $, since otherwise, in
any neighbourhood of $0$ we would have a nonzero $x$ with $f(x+x_2) =f(x_1) $
and this would contradict the fact
that $f$ defines a local diffeomorphism. Thus by Theorem \ref{theorem_pucci}
we note that $\psi $ has a critical point $v$ such that
$\psi (v) >0$. Since $v$ is a critical point we have
\[
\psi '(v)=\eta '(f(v+x_2) -f(x_1) )\circ f'(v+x_2)=0.
\]
Since $f'(v+x_2)$ is invertible, we see that $\eta '(f(v+x_2) -f(x_1) )=0$.
So by the assumption
(A1) we calculate $f(v+x_2) -f(x_1) =0$. This
means that $\psi (v)=0$ which is impossible.
\end{proof}

Next we state our result with some remarks.

\begin{remark} \rm
We see that by putting $\eta (x) =\frac{1}{2}\|x\| ^{2}$ we easily obtain
Theorem \ref{TheoSW} from Theorem \ref{MainTheo}.
 Moreover, in \cite{GGS} the following condition is additionally assumed:
\begin{itemize}
\item[(A4)] there exist positive constants $\alpha $, $c$, $M$ such that
\[
\eta (x) \geq c\| x\| ^{\alpha }\quad \text{for }\| x\| \leq M.
\]
\end{itemize}
We do not need this condition now since we use a different tool for the
proof the main result. Moreover, the proof becomes considerably simpler and
the result is now a full counterpart of the Hilbert space setting case.
\end{remark}

We conclude with an example of a functional $\eta $ satisfying our
assumptions.
Let us take for $p\geq 2$ a uniformly convex Banach space
\[
W_0^{1,p}([0,1],\mathbb{R}):=W_0^{1,p}
\]
consisting of absolutely continuous functions $x:[0,1]\to \mathbb{R}$
such that $x(0)=x(1)=0$ and $x'\in L^p([ 0,1] ,\mathbb{R})$ considered
with the usual norm
\[
\|x\|_{W_0^{1,p}}=(\int_0^1| x'(t)|^pdt)^{1/p},\quad x\in W_0^{1,p}.
\]
We also refer to \cite{BrezisBook} for some background on this space.

\begin{lemma} \label{lem_dif_LP}
 Assume that $2\leq p<+\infty $. A functional $f:L^p\to \mathbb{R}$ given
by the formula $f(u) =\frac{1}{p}\int_0^1|u(s)|^pds=\frac{1}{p}\| u\|_{L^p}^p$
is continuously differentiable and for any $u\in L^p$ we
have
\[
f'(u)v=\int_0^1| u(t)| ^{p-2}u(t)v(t)dt\text{
for }v\in L^p.
\]
\end{lemma}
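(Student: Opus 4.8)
The plan is to first compute the Gâteaux derivative by differentiating under the integral sign, then to identify $f'(u)$ with the element $|u|^{p-2}u$ of the dual space $L^{p'}$ (where $\frac{1}{p}+\frac{1}{p'}=1$), and finally to establish continuity of $u\mapsto f'(u)$ by showing that the associated superposition (Nemytskii) operator $u\mapsto|u|^{p-2}u$ is continuous from $L^p$ into $L^{p'}$. Since continuous Gâteaux differentiability implies that the mapping is $C^1$, as recalled in Section~3, this will finish the proof.

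First I would fix $u,v\in L^p$ and study the difference quotient
\[
\frac{f(u+sv)-f(u)}{s}=\frac{1}{p}\int_0^1\frac{|u(t)+sv(t)|^p-|u(t)|^p}{s}\,dt.
\]
The real function $r\mapsto\frac{1}{p}|r|^p$ is $C^1$ with derivative $r\mapsto|r|^{p-2}r$, and here $p\geq 2$ guarantees that this derivative is continuous; so by the mean value theorem the integrand equals $|u(t)+\theta(t)sv(t)|^{p-2}(u(t)+\theta(t)sv(t))\,v(t)$ for some $\theta(t)\in(0,1)$. For $|s|\leq 1$ this is dominated pointwise by $(|u(t)|+|v(t)|)^{p-1}|v(t)|$, which is integrable: indeed $(|u|+|v|)^{p-1}\in L^{p'}$ while $v\in L^p$, so Hölder's inequality gives an $L^1$ bound independent of $s$. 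Dominated convergence then yields
\[
f'(u)v=\lim_{s\to 0}\frac{f(u+sv)-f(u)}{s}=\int_0^1|u(t)|^{p-2}u(t)v(t)\,dt,
\]
which is the asserted formula. The growth identity $||u|^{p-2}u|^{p'}=|u|^{(p-1)p'}=|u|^p$ shows that $|u|^{p-2}u\in L^{p'}$, so $f'(u)$ is a bounded linear functional represented by this function, with $\|f'(u)\|_{(L^p)^*}=\|u\|_{L^p}^{p-1}$.

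The main obstacle is the continuity of $u\mapsto f'(u)$, since the differentiation step is routine. By the duality identification above, this reduces to showing that the Nemytskii operator $N:u\mapsto|u|^{p-2}u$ is continuous from $L^p$ into $L^{p'}$. I would argue by the standard subsequence method: if $u_n\to u$ in $L^p$, then from any subsequence one extracts a further subsequence $u_{n_k}$ converging almost everywhere and dominated by a fixed $g\in L^p$ (a consequence of the Riesz--Fischer theorem). Continuity of $r\mapsto|r|^{p-2}r$ gives $N(u_{n_k})\to N(u)$ almost everywhere, while $|N(u_{n_k})|=|u_{n_k}|^{p-1}\leq g^{p-1}\in L^{p'}$, since $(g^{p-1})^{p'}=g^p\in L^1$; dominated convergence in $L^{p'}$ then yields $N(u_{n_k})\to N(u)$ in $L^{p'}$. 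As every subsequence admits a sub-subsequence converging to the same limit $N(u)$, the full sequence converges, proving the continuity of $N$ and hence of $f'$. Consequently $f$ is continuously Gâteaux differentiable, and therefore $C^1$, with the stated derivative.
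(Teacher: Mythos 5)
Your proof is correct, but there is nothing in the paper to compare it against: the paper states Lemma~3.8 without proof, treating it as a known fact (Remark~3.9 only observes that the case $p=2$ follows from the scalar product and that the $W_0^{1,p}$ version follows by the chain rule; the general statement is standard and can be found, e.g., in the cited work of Dinca, Jebelean and Mawhin). Your argument supplies exactly the missing details, and it is the canonical one: the mean value theorem plus H\"older give the uniform $L^1$ bound $(|u|+|v|)^{p-1}|v|$ on the difference quotients, dominated convergence identifies the G\^ateaux derivative, the identity $\bigl||u|^{p-2}u\bigr|^{p'}=|u|^p$ places the derivative in $L^{p'}\cong(L^p)^*$, and continuity of the Nemytskii operator $u\mapsto|u|^{p-2}u$ from $L^p$ to $L^{p'}$ follows from the subsequence principle together with the partial converse of the dominated convergence theorem (a.e.\ convergence with an $L^p$ dominant along a subsequence); finally, continuous G\^ateaux differentiability implies $C^1$, which the paper itself records in Section~3. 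Two minor remarks: measurability of the intermediate point $\theta(t)$ is irrelevant, since the mean value theorem is used only to produce the pointwise bound while the difference quotient itself is manifestly measurable (your write-up is consistent with this, but it is worth saying explicitly); and your proof nowhere uses $p\geq 2$ -- the function $r\mapsto|r|^{p-2}r$ is continuous with the required growth for every $p>1$, so the lemma holds for all $p>1$, the restriction $p\geq2$ being relevant only for the uniform convexity setting in which the paper applies it.
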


\begin{remark} \label{rem_diff_wp} \rm
For the case $p=2$ the assertion of Lemma \ref{lem_dif_LP}
follows from the properties of the scalar product. For the case $2<p<+\infty$,
it follows from Lemma \ref{lem_dif_LP} and from the chain rule, that a
functional $h:W_0^{1,p}\to\mathbb{R}$ defined by
\begin{equation}
h(u) =\frac{1}{p}\int_0^1| u'(t)| ^pdt  \label{def_fi}
\end{equation}
is continuously differentiable and for any $u\in W_0^{1,p}$ we have
\[
h'(u)v=\int_0^1| u'(t)|
^{p-2}u'(t)v'(t)dt\text{ for }v\in W_0^{1,p}.
\]
\end{remark}

Thus we see that both $f$ and $h$ above provide us with examples.

\subsection{An approach by a duality mapping}

In this section we improve results from \cite{galewskikoniorczyk} by
extending them to cover the case of any duality mapping relative to some
increasing function and not to the special function
$t\to |t| ^{p-1}$ for $p>1$, as provided therein. The second improvement
is the simplification of the proof in the spirit described above.

A normed linear space $E$ is called strictly convex if the unit sphere
contains no line segments on its surface, i.e., condition
$\|x\| =1,\| y\| =1,x\neq y$ implies that
\[
\| \frac{1}{2}(x+y) \| <1
\]
or in other words, that the unit sphere is a strictly convex set.
The space $E$ is called uniformly convex, if for each
$\varepsilon \in (0,2]$ there exists $\delta (\varepsilon ) >0$ such that if
$\| x\| =1,\| y\| =1$ and $\|x-y\| \geq \varepsilon $, then
$\| x+y\| \leq 2(1-\delta (\varepsilon ) ) $.
A uniformly convex space is necessarily strictly convex and
reflexive.

We recall from \cite{MahJeb} the notion of duality mapping from $E$ to
$E^{\ast }$ relative to a normalization function. In the sequel we shall
write duality mapping with the understanding that we mean a duality mapping
relative to some normalization function. A continuous function
$\varphi :\mathbb{R}_{+}\to \mathbb{R}_{+}$ is called a normalization function
if it is strictly increasing, $\varphi (0) =0$ and
$\varphi(r)\to \infty $ with $r\to \infty $. A duality mapping on
$E$ corresponding to a normalization function $\varphi $ is an operator
$A:E\to 2^{E^{\ast }}$ such that for all $u\in E$ and
$u^{\ast }\in A(u) $,
\[
\| A(u) \| _{\ast }=\varphi (\|u\| ) , \quad
\langle u^{\ast },u\rangle
=\| u^{\ast }\| _{\ast }\| u\| .
\]
Some remarks are in order, especially those concerning the assumptions
on a duality mapping.  We recall from \cite{MahJeb} that
\begin{itemize}
\item[(i)] for each $u\in E$, $A(u) $ is a bounded, closed and convex
subset of $E^{\ast };$

\item[(ii)] $A$ is monotone
\[
\langle u_1^{\ast }-u_2^{\ast },u_1-u_2\rangle \geq
(\varphi (\| u_1\| ) -\varphi (\| u_2\| ) ) (\|
u_1\| -\| u_2\| )
\]
for each $u_1,u_2\in E$, $u_1^{\ast }\in A(u_1),u_2^{\ast }\in A(u_2)$;

\item[(iii)] for each $u\in E$, $A(u) =\partial \psi (u) $,
where $\partial \psi (\cdot ):E\to 2^{E^{\ast }}$ denotes the
subdifferential in the sense of convex analysis of the functional $\psi
(u) =\int_0^{\| u\| }\varphi (t) dt$, i.e.
\[
\partial \psi (u) =\big\{ u^{\ast }\in E^{\ast }:\psi (
y) -\psi (u) \geq \langle u^{\ast },y-u\rangle
_{E^{\ast },E}\text{ for all }y\in E\big\} ;
\]

\item[(iv)] if $E^{\ast }$ is strictly convex, then $card(A(u) )=1$,
for all $x\in E$;

\item[(v)] when $E$ is reflexive and strictly convex, then operator
$A:E\to E^{\ast }$ is demicontinuous, which means that if
$x_{n}\to x$ in $E$ then $A(x_{n}) \rightharpoonup A(x) $ in $E^{\ast }$.
\end{itemize}
We see by \cite[Proposition 2.8]{phelps} that since $A$ is
demicontinuous, we obtain that functional $\psi $ is differentiable in the
sense of G\^{a}teaux and operator $A$ being its derivative.

\begin{lemma} \label{lem_duality_mapping}
Assume that $E$ is a reflexive Banach space with
a strictly convex dual $E^{\ast }$. Then the duality mapping
$A:E\to E^{\ast }$\ corresponding to a normalization function $\varphi $ is
single-valued and the functional $\psi $ is differentiable in the sense of
 G\^{a}teaux with $A$ being its G\^{a}teaux derivative.
\end{lemma}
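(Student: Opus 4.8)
The plan is to read off the conclusion by combining the structural facts (i)--(v) about the duality mapping recorded above with the convex-analytic differentiability result already cited in the preceding paragraph. There are exactly two assertions to verify: that $A$ is single-valued, so that it is a genuine operator $A\colon E\to E^{\ast}$, and that this operator is the G\^ateaux derivative of the convex functional $\psi(u)=\int_{0}^{\|u\|}\varphi(t)\,dt$.

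The single-valuedness is immediate and requires no new argument: since $E^{\ast}$ is strictly convex by hypothesis, property (iv) gives $\operatorname{card}A(u)=1$ for every $u\in E$, so $A$ is a single-valued map into $E^{\ast}$. For the second assertion I would proceed through property (iii), which identifies $A$ with the subdifferential $\partial\psi$ of the continuous convex functional $\psi$. The route is then: first establish that $A$ is demicontinuous, i.e.\ that $x_{n}\to x$ in $E$ forces $A(x_{n})\rightharpoonup A(x)$ in $E^{\ast}$ as in (v); and then invoke \cite[Proposition 2.8]{phelps} in the form already used above, namely that a continuous convex functional whose subdifferential is demicontinuous is G\^ateaux differentiable with derivative equal to that subdifferential. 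Feeding in $A=\partial\psi$ from (iii) and the demicontinuity just obtained, this yields that $\psi$ is G\^ateaux differentiable with $\psi_{G}'=A$, which is the claim.

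The step I expect to be the crux is the demicontinuity of $A$, since single-valuedness is automatic and the final differentiability is then a citation. I would secure it as follows: $A=\partial\psi$ is the subdifferential of a continuous convex functional, hence maximal monotone; it is single-valued by (iv) and everywhere defined on the reflexive space $E$; a single-valued, everywhere-defined maximal monotone operator on a reflexive Banach space is demicontinuous, which is exactly what (v) records in the present setting. With demicontinuity in hand the proof reduces to the chain (iii) $\to$ (iv) $\to$ (v) $\to$ \cite[Proposition 2.8]{phelps}, all of whose remaining ingredients (convexity and continuity of $\psi$, and the monotonicity of $A$) are either elementary or already listed in (i)--(iii).
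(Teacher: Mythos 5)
Your proposal is correct and follows the same skeleton as the paper's own proof: single-valuedness of $A$ from property (iv), demicontinuity of $A=\partial\psi$, and then the appeal to \cite[Proposition 2.8]{phelps} already made in the paragraph preceding the lemma. The one place where you genuinely go beyond the paper is the demicontinuity step. The paper simply asserts that the now single-valued subdifferential is demicontinuous, implicitly leaning on property (v); you instead derive demicontinuity from monotone operator theory ($\psi$ is convex and continuous, so $\partial\psi$ is maximal monotone; a single-valued, everywhere-defined maximal monotone operator on a reflexive Banach space is demicontinuous). This patch has real value here: as recorded in the paper, (v) assumes that $E$ itself is reflexive and \emph{strictly convex}, whereas the lemma only assumes that the dual $E^{\ast}$ is strictly convex (which yields smoothness, not strict convexity, of $E$), so a bare citation of (v) involves a hypothesis mismatch that your self-contained argument avoids. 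In short: same route, but your version closes a small gap that the paper's terse proof leaves open.
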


\begin{proof}
The duality mapping $A$ is now single valued and its
potential, i.e. $\psi $, has a single-valued subdifferential which is
demicontinuous. Now the argument given prior to the formulation finishes the
proof.
\end{proof}

In view of Lemma \ref{lem_duality_mapping}, it is apparent that assuming the
continuous differentiability of a potential of a duality mapping is not a
very restrictive condition. Indeed, in order to get some reasonable result
we will have to assume that the duality mapping has the potential which is
continuously differentiable. This is necessary in order to obtain that the
functional $h:X\to \mathbb{R}$ given by the formula
\[
h(x) =\psi (f(x) -y)
\]
is continuously differentiable. Moreover, functional $\psi $ can be
considered as a potential of a duality mapping $A$ in case it is
single-valued. In this case we write $A:Y\to Y^{\ast }$ and by
writing $A:Y\to Y^{\ast }$ we implicitly assume that $A$ is single
valued. We will follow this observation in the sequel. We formulate our
second global diffeomorphism result.

\begin{theorem} \label{GLOBIFnew_v1}
Let $X$, $Y$ be a real Banach spaces. Let the potential
$\psi $ of a duality mapping $A:Y\to Y^{\ast }$ corresponding to a
normalization function $\varphi $ be continuously G\^{a}teaux
differentiable. Assume that $f:X\to Y$ is a $C^1$-mapping such
that:
\begin{itemize}
\item[(A5)] for any $y\in Y$ the functional $h:X\to \mathbb{R}$ given by
the formula
\[
h(x) =\psi (f(x) -y)
\]
satisfies the Palais-Smale condition,

\item[(A6)] $f'(x)\in \operatorname{Isom}(X,Y) $ for any $x\in X$.
\end{itemize}
Then $f$ is a diffeomorphism.
\end{theorem}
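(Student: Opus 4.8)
The plan is to mirror the proof of Theorem \ref{MainTheo} almost verbatim, replacing the abstract functional $\eta$ by the potential $\psi$ of the duality mapping $A$. The crucial preliminary step is to observe that $\psi$ enjoys precisely the two properties that (A1) supplied in that argument. Since $\psi(u)=\int_0^{\|u\|}\varphi(t)\,dt$ and $\varphi$ is a normalization function (strictly increasing with $\varphi(0)=0$, hence strictly positive on $(0,\infty)$), we have $\psi(u)=0$ if and only if $u=0$. Moreover, because $A(u)=\psi'(u)$ and $\|A(u)\|_{\ast}=\varphi(\|u\|)$, the derivative $\psi'(u)=A(u)$ vanishes if and only if $\varphi(\|u\|)=0$, i.e.\ if and only if $u=0$. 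These two equivalences are the exact analogue of (A1) in the present setting.

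With these observations in hand, I would first note that (A6) together with the Inverse Function Theorem makes $f$ a local diffeomorphism, so only surjectivity and injectivity remain to be shown. For surjectivity, fix $y\in Y$ and consider $h(x)=\psi(f(x)-y)$. Since $\psi$ is continuously G\^{a}teaux differentiable it is $C^1$, so $h$ is $C^1$ as a composition of $C^1$-maps; it is bounded below by $0$ and satisfies the Palais-Smale condition by (A5). By Theorem \ref{CPT} the infimum of $h$ is attained at some $\bar{x}$, and the chain rule together with Fermat's principle yield $h'(\bar{x})=A(f(\bar{x})-y)\circ f'(\bar{x})=0$. Since $f'(\bar{x})$ is invertible, and in particular surjective, this forces $A(f(\bar{x})-y)=0$, whence $f(\bar{x})=y$ by the second equivalence above.

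For injectivity I would argue by contradiction exactly as in Theorem \ref{MainTheo}. Assuming $f(x_1)=f(x_2)$ with $x_1\neq x_2$, I set $e=x_1-x_2\neq 0$ and define $G(x)=\psi(f(x+x_2)-f(x_1))$. By (A5) this functional satisfies the Palais-Smale condition, and a direct computation gives $G(0)=G(e)=0$. The point $0$ is a strict local minimum of $G$, for otherwise an arbitrarily nearby $x\neq 0$ with $G(x)=0$ would, via the first equivalence, satisfy $f(x+x_2)=f(x_1)$, contradicting the local injectivity of $f$. Theorem \ref{theorem_pucci} then produces a critical point $v$ with $G(v)>0$; but $G'(v)=A(f(v+x_2)-f(x_1))\circ f'(v+x_2)=0$ together with the invertibility of $f'(v+x_2)$ forces $A(f(v+x_2)-f(x_1))=0$, hence $f(v+x_2)=f(x_1)$ and $G(v)=0$, a contradiction.

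The only genuinely new ingredient relative to Theorem \ref{MainTheo} is the translation of the duality-mapping hypotheses into these two equivalences; everything else is structurally identical. I therefore expect the main (and essentially only) obstacle to be the clean derivation of $\psi(u)=0\Leftrightarrow u=0$ and $A(u)=0\Leftrightarrow u=0$ from the defining relations $\psi(u)=\int_0^{\|u\|}\varphi$ and $\|A(u)\|_{\ast}=\varphi(\|u\|)$, together with the care needed to treat \emph{continuously G\^{a}teaux differentiable} as $C^1$ so that the chain rule applies legitimately to the composition defining $h$.
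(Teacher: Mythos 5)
Your proposal is correct and follows essentially the same route as the paper: surjectivity via Theorem \ref{CPT}, Fermat's principle, and the chain rule, with $A(u)=0\Leftrightarrow u=0$ extracted from $\|A(u)\|_{\ast}=\varphi(\|u\|)$, and injectivity by the Pucci--Serrin argument of Theorem \ref{MainTheo} (which the paper itself invokes with the phrase ``exactly as before''). Your explicit packaging of the two equivalences $\psi(u)=0\Leftrightarrow u=0$ and $A(u)=0\Leftrightarrow u=0$ as the analogue of (A1) is precisely the observation the paper's proof rests on, just stated more systematically.
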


\begin{proof}
Let us fix a point $y\in Y$. Functional $h$ is a composition
of two $C^1-$mappings, so it is $C^1$ itself. Moreover, $h$ is bounded
from below and it satisfies the Palais-Smale condition by (A5). Thus it
follows from Theorem \ref{CPT} that there exists an argument of a minimum
which we denote by $\overline{x}$. We see by the chain rule and Fermat's
Principle and by the assumptions on a duality mapping that
\[
0=h'(\overline{x})=A(f(\overline{x}) -y)
\circ f'(\overline{x}) .
\]
Since by (A6), mapping $f'(\overline{x}) $ is
invertible, we get that $A(f(\overline{x}) -y) =0$.
Now, by the property that $\| A(u) \| _{\ast
}=\varphi (\| u\| ) $ we note that
\[
\| A(f(\overline{x}) -y) \| _{\ast
}=\varphi (\| f(\overline{x}) -y\|
) .
\]
So it follows, since $\varphi (0) =0$ and since $\varphi $ is
strictly increasing, that
\[
f(\overline{x}) -y=0,
\]
which proves the existence of $x\in X$ for every $y\in Y$, such that $
f(\overline{x}) =y$. The uniqueness can be shown by
contradiction exactly as before.
\end{proof}

Now we provide a simple example of a space and a duality mapping for which
the assumptions of the above result hold.

Let us define a single valued operator $A:W_0^{1,p}\to (
W_0^{1,p}) ^{\ast }$ such that
\[
\langle Au,v\rangle =\int_0^1| u'(t)| ^{p-2}u'(t)v'(t)dt.
\]
It follows from Remark \ref{rem_diff_wp} that if $2\leq p<+\infty $, then $A$
is a potential operator and its $C^1-$potential is $h$ defined by
\eqref{def_fi}. Therefore by the cited remarks from \cite{MahJeb} we get that a
duality mapping on $W_0^{1,p}$ corresponding to a normalization function $
t\to t^{p-1}$ is defined by $A$.

\section{Applications}

\subsection{Applications to algebraic equations}

We conclude this paper with some applications to the unique solvability of
nonlinear equations of the form $Ax=F(x)$\ where $A$\ is a nonsingular
matrix and $F$\ is a $C^1$ nonlinear operator. Some motivation to extend
existence tools for such equations can be found in
\cite{add3,add4,add2,add1}.
We consider the  problem
\begin{equation}
Ax=F(x) ,  \label{equAPP}
\end{equation}
where $A$ is an $n\times n$ matrix (possibly singular) and
$F:\mathbb{R}^n\to\mathbb{R}^n$ is a $C^1-$mapping.
We consider $\mathbb{R}^n$ with the Euclidean norm in both, theoretical
results and in the example which follows.

To apply Theorem \ref{MainTheo copy(2)} to the solvability of
\eqref{equAPP} we need some assumptions. Let us recall that if $A^{\ast }$
denotes the transpose of matrix $A$, then $A^{\ast }A$ is symmetric and
positive semidefinite. Let $\delta _{\max }(A) $ denote the
greatest of the singular values of $A$, i.e. the eigenvalues of $A^{T}A$,
with the obvious meaning of $\delta _{\min }(A) $.  We assume
that $A$ is different from the $zero$ matrix, so that both mentioned values
are positive.

\begin{theorem}\label{firstAlgTheo}
Assume that $F:\mathbb{R}^n\to\mathbb{R}^n$ is a $C^1$-mapping and the
following conditions hold:
\begin{itemize}
\item[(i)] either there exists a constant $0<a<\delta _{\min }(A) $
such that
\[
\| F(x) \| \leq a\| x\|
\]
for all $x\in\mathbb{R}^n$ with sufficiently large norm,
or  else there exists a constant
$b>\delta _{\max }(A) $ such that
\[
\| F(x) \| \geq b\| x\|
\]
for all $x\in \mathbb{R}^n$ $x\in\mathbb{R}^n$ with sufficiently large norm;

\item[(ii)] $\det (A-F{'}(x) ) \neq 0$ for every
every $x\in \mathbb{R}^n$.
\end{itemize}
Then Problem \eqref{equAPP} has exactly one solution for any
$\xi \in\mathbb{R}^n$.
\end{theorem}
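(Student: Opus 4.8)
The plan is to recast the problem as a global invertibility question and apply the finite-dimensional Hadamard theorem (Theorem~\ref{MainTheo copy(2)}). I would define $g:\mathbb{R}^n\to\mathbb{R}^n$ by $g(x)=Ax-F(x)$, so that solving the equation for a prescribed right-hand side $\xi$ amounts to solving $g(x)=\xi$; showing that $g$ is a diffeomorphism then yields exactly one solution for every $\xi\in\mathbb{R}^n$. Since $F$ is $C^1$ and $x\mapsto Ax$ is linear, $g$ is $C^1$, so it remains only to verify the two hypotheses of Theorem~\ref{MainTheo copy(2)}: that $g'(x)$ is invertible for every $x$, and that $\|g(x)\|\to\infty$ as $\|x\|\to\infty$.

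The invertibility of the derivative is immediate. We have $g'(x)=A-F'(x)$, and assumption (ii) states precisely that $\det(A-F'(x))\neq 0$ for every $x\in\mathbb{R}^n$, so $g'(x)$ is a bijection of $\mathbb{R}^n$ for each $x$.

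The substantive step is coercivity, and here I would use the extremal singular values of $A$, which give the two-sided bound
\[
\delta_{\min}(A)\|x\|\le\|Ax\|\le\delta_{\max}(A)\|x\|\qquad(x\in\mathbb{R}^n).
\]
I then split according to the two alternatives in (i). In the first case, for $\|x\|$ large we have $\|F(x)\|\le a\|x\|$ with $a<\delta_{\min}(A)$, so the reverse triangle inequality gives
\[
\|g(x)\|\ge\|Ax\|-\|F(x)\|\ge(\delta_{\min}(A)-a)\|x\|,
\]
which tends to $\infty$ because $\delta_{\min}(A)-a>0$. In the second case, for $\|x\|$ large we have $\|F(x)\|\ge b\|x\|$ with $b>\delta_{\max}(A)$, and now
\[
\|g(x)\|\ge\|F(x)\|-\|Ax\|\ge(b-\delta_{\max}(A))\|x\|\to\infty .
\]
In either alternative $\|g(x)\|\to\infty$ as $\|x\|\to\infty$, which is exactly the coercivity hypothesis; the ``for sufficiently large norm'' qualifier in (i) is harmless, since coercivity only concerns the behaviour of $g$ at infinity.

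With both hypotheses verified, Theorem~\ref{MainTheo copy(2)} shows that $g$ is a diffeomorphism of $\mathbb{R}^n$, in particular a bijection, so $g(x)=\xi$ has exactly one solution for every $\xi\in\mathbb{R}^n$. I expect the only real obstacle to be the coercivity estimate: one must choose the correct auxiliary map $g=A-F$ so that the global inversion machinery applies, identify which of $\|Ax\|$ and $\|F(x)\|$ dominates in each alternative of (i), and invoke the singular-value bounds in the right direction (the lower bound on $\|Ax\|$ in the first case, the upper bound in the second). Everything else is a direct reading of the hypotheses into Theorem~\ref{MainTheo copy(2)}.
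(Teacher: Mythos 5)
Your proposal is correct and follows essentially the same route as the paper: set $g(x)=Ax-F(x)$, obtain coercivity from the singular-value bounds $\delta_{\min}(A)\|x\|\le\|Ax\|\le\delta_{\max}(A)\|x\|$ together with the reverse triangle inequality, get invertibility of $g'(x)=A-F'(x)$ from (ii), and invoke Theorem~\ref{MainTheo copy(2)}. The only difference is cosmetic: the paper writes out only the first alternative of (i) and dismisses the second with ``follows likewise,'' whereas you spell out both estimates explicitly.
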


\begin{proof}
We put $\varphi (x) =Ax-F(x) $. From
the first possibility of assumption (ii) it follows for
$x\in\mathbb{R}^n$ with sufficiently large norm,
\begin{align*}
\| \varphi (x) \|
&=\| Ax-F( x) \| \geq \| Ax\| -\| F(x) \|  \\
&\geq \sqrt{\langle A^{\ast }Ax,x\rangle }-a\|x\|
 \geq (\delta _{\min }(A) -a) \| x\| .
\end{align*}
Hence the function $\varphi $ is coercive. Since $\det \varphi '(x) \neq 0$
for every  $x\in\mathbb{R}^n$, it follows by 
Theorem \ref{MainTheo copy(2)} that $\varphi $ is a global
homeomorphism and thus equation \eqref{equAPP} has exactly one solution for
any $\xi \in\mathbb{R}^n$. The second case of assumption (ii)
follows likewise.
\end{proof}

\begin{remark} \rm
We note that  to obtain coercivity of the function $\varphi $ in the above
theorem we can employ the following assumption instead of (ii),
\begin{itemize}
\item[(iia)] either there exist constants $\alpha >0$, $0<\gamma <1$ such that
\[
\| F(x) \| \leq \alpha \| x\|
^{\gamma }
\]
for all $x\in\mathbb{R}^n$ with sufficiently large norm, or

\item[(iib)]  there exist constants $\beta >0$, $\theta >1$ such that
\[
\| F(x) \| \geq \beta \| x\|^{\theta }
\]
for all $x\in\mathbb{R}^n$ with sufficiently large norm.
\end{itemize}
\end{remark}

Now we provide some examples of problems which we can consider by the above
method.

\begin{example} \rm
Consider the indefinite matrix
\[
A=\begin{bmatrix}
-2 &  1 \\
 6 & -3
\end{bmatrix}
\]
and the function $F:\mathbb{R}^{2}\to\mathbb{R}^{2}$ given by
\[
F(x,y) =(x^{3}+y+1,6x+y+y^{3}+1)\,.
\]
On $\mathbb{R}^{2}$ consider the Euclidean norm,
$\| (x,y) \| = \sqrt{x^{2}+y^{2}}$.
We recall that $\| (x,y)\| \leq 2^{\frac{1}{3}}\sqrt[6]{x^{6}+y^{6}}$.
Note that $F(x,y) =(x^{3},y^{3}) +(0,6x) +(y,y)+(1,1)$. Let
\[
\varphi (x,y) =F(x,y)-A(x,y),\quad (x,y)\in \mathbb{R}^{2}.
\]
Hence
\begin{align*}
\| \varphi (x,y) \|
&\geq \frac{1}{2}\| (x,y)\| ^{3}-6\sqrt{2}\| (x,y)\| -\sqrt{2}-\| A\| \| (x,y)\|   \\
&=\| (x,y)\| \Big(\frac{1}{2}\| (x,y)\|
^{2}-(6\sqrt{2}\mathbf{+}\| A\| ) -\frac{\sqrt{2}}{\| (x,y)\| }\Big) .
\end{align*}
From the above sequence of inequalities it follows that $\varphi $ is
coercive.

One can easily see that for any $(x,y) \in\mathbb{R}^{2}$
\[
F'(x,y) -A=\begin{bmatrix}
3x^{2}+3 & 0 \\
0 & 3y^{2}+4
\end{bmatrix}.
\]
Since  $\det (F'(x,y) -A) >0$, we see
that the problem $Ax=F(x) $ has exactly one (nontrivial) solution.
\end{example}

\subsection{Application to an integro-differential system}

In this section we propose some improvement of results from
\cite{galewskikoniorczyk} as far as the growth assumptions and methods of the
proof are concerned. Namely, we use the Bielecki type norm on the underlying
space instead of the regular one. Since the proofs do not differ that much
apart from some estimation, we provide only the main differences referring to
\cite{galewskikoniorczyk} for the more detailed reasoning. We were inspired
by \cite{majewski} to come up with these results.

Prior to formulating the problem under consideration we introduce some
required function space setting. We introduce
\[
W^{1,p}([0,1],\mathbb{R}^n)
=\big\{x:[0,1]\to \mathbb{R}^n\text{ is
absolutely continuous, }x'\in L^p([0,1],\mathbb{R}^n)\big\}.
\]
Here $x'$ denotes the a.e. derivative of $x$. Further, we
denote $L^p([0,1],\mathbb{R}^n)$ by $L^p$ and
$W^{1,p}([0,1],\mathbb{R}^n)$ by $W^{1,p}$. The $W^{1,p}$ space
is equipped with the
usual norm $\|x\|_{W^{1,p}}^p=\|x\|_{L^p}^p+\|x'\|_{L^p}^p$.
The Sobolev space  is defined as
\[
\tilde{W}_0^{1,p}([0,1],\mathbb{R}^n)=\{x\in W^{1,p},x(0)=0\}
\]
and it is equipped with the norm
\begin{equation}
\|x\|_{\tilde{W}_0^{1,p}}
=\Big(\int_0^1| x'(t)| ^pdt\Big)^{1/p},\quad x\in \tilde{W}_0^{1,p}
\label{W01p_norm}
\end{equation}
equivalent to $\|x\|_{W^{1,p}}$. By definition, for any $p>1$, we have the
following chain of embeddings
\begin{equation}
\tilde{W}_0^{1,p}\hookrightarrow W^{1,p}\hookrightarrow L^p.
\label{space_imbedding}
\end{equation}
There exists a constant $C$ such that for any $u\in \tilde{W}_0^{1,p}$
\[
\|u\|_{W^{1,p}[0,1]}\leq C\|u'\|_{L^p[0,1]}.
\]
The space $\tilde{W}_0^{1,p}$ is uniformly convex.

In the literature, the existence of the solution to integro-differential
equation is obtained by the Banach fixed point theorem or another type of
fixed point theorem, see \cite{integro1,integro2}.

Let us formulate a nonlinear integro-differential equation with variable
integration limit with an initial condition, which reads as follows
\begin{gather}
x'(t)+\int_0^{t}\Phi (t,\tau ,x(\tau ))d\tau =y(t),\quad
\text{for a.e. }t\in [ 0,1],  \label{I_D_eq} \\
x(0)=0,  \label{I_D_iv}
\end{gather}
where $y\in L^p$ is fixed for the time being.

Now we impose assumptions on the nonlinear term. These ensure that the
problem is well posed in the sense that the solution to
\eqref{I_D_eq}-\eqref{I_D_iv} exists, it is unique and the solution operator
depends in a differentiable manner on a parameter $y$ provided we allow it to vary.
This implies that problem \eqref{I_D_eq}-\eqref{I_D_iv} is well posed in the
sense of Hadamard.

Let $P_{\Delta }=\{(t,\tau )\in [ 0,1]\times [ 0,1];\tau \leq t\} $.
We assume that function $\Phi :P_{\Delta }\times \mathbb{R}
^n\to \mathbb{R}^n$ satisfies the following conditions:
\begin{itemize}
\item[(A7)] 
$\Phi (\cdot ,\cdot ,x)$ is measurable on $
P_{\Delta }$ for any $x\in \mathbb{R}^n$ and $\Phi (t,\tau ,\cdot )$ is
continuously differentiable on $\mathbb{R}^n$ for a.e. $(t,\tau )\in
P_{\Delta }$;

\item[(A8)] 
there exist functions $a$, $b\in L^p(P_{\Delta},R_0^{+})$ such that
\[
|\Phi (t,\tau ,x)|\leq a(t,\tau )|x|+b(t,\tau )
\]
for a.e. $(t,\tau )\in P_{\Delta }$, all $x\in \mathbb{R}^n$ and there
exists a constant $\overline{a}>0$ such that
\[
\int_0^{t}a^p(t,\tau )d\tau \leq \overline{a}^p
\]
for a.e. $t\in [ 0,1]$.

\item[(A9)] 
there exists functions $c\in L^p(P_{\Delta },\mathbb{R}_0^{+})$,
$\alpha \in C(\mathbb{R}_0^{+},\mathbb{R}_0^{+})$
and a constant $C>0$ such that
\[
|\Phi _{x}(t,\tau ,x)|\leq c(t,\tau )\alpha (|x|)
\]
for a.e. $(t,\tau )\in P_{\Delta }$ and all $x\in \mathbb{R}^n$; moreover
\[
\int_0^{t}c^{q}(t,\tau )d\tau \leq C,\text{ for }a.e.\text{ }t\in
[ 0,1].
\]
\end{itemize}

\begin{remark} \rm
In \cite{galewskikoniorczyk} it was assumed that
\[
\|a\|_{L^p(P_{\Delta },\mathbb{R})}<2^{-\frac{(p-1)}{p}}
\]
which considerably restricts the growth.
\end{remark}

 For any $k>0$ let us define another form of the Bielecki type norm
\begin{equation}
\|x\|_{\tilde{W}_0^{1,p},k}
=\Big(\int_0^1e^{-kt}|x'dt\Big) ^{1/p}.  \label{Bielecki_1_def}
\end{equation}
For $k=0$ the above function defines a norm introduced by \eqref{W01p_norm}
and therefore hereafter we will skip index $0$. It is easy to notice that
\begin{equation}
e^{-k/p}\|x\|_{\tilde{W}_0^{1,p}}\leq \|x\|_{\tilde{W}
_0^{1,p},k}\leq \|x\|_{\tilde{W}_0^{1,p}}  \label{rel_equ_biel}
\end{equation}
For any $k>0$ and $x\in \tilde{W}_0^{1,p}$ we assert the following
relations:
\begin{gather}
\|x\|_{k}\leq \frac{\|x\|_{\tilde{W}_0^{1,p},k}}{k^{1/p}}, \label{Bielecki_1_1}\\
\| \int_0^{\cdot }|x(\tau )|d\tau \| _{k}
=\Big(\int_0^1e^{-kt}\Big(\int_0^{t}|x(\tau )|d\tau \big) ^pdt\Big)
^{1/p}\leq \frac{\|x\|_{\tilde{W}_0^{1,p},k}}{k^{2/p}}
\label{Bielecki_1_2}
\end{gather}
where the symbol $\int_0^{\cdot }u(\tau )d\tau $ denotes the function
$[0,1]\ni t\to \int_0^{t}u(\tau )d\tau $. Now let us prove the
stated relations, starting with \eqref{Bielecki_1_1}. Fix $k>0$ and
$x\in\tilde{W}_0^{1,p}$:
\begin{align*}
\|x\|_{k}^p
&=\int_0^1e^{-kt}|x(t)|^pdt
 =\int_0^1e^{-kt}|\int_0^{t}x'(\tau )d\tau | ^pdt \\
&\leq \int_0^1e^{-kt}\int_0^{t}| x'(\tau )|^pd\tau dt
 =  \int_0^1| x'(\tau )| ^p(\int_{\tau }^1e^{-kt}dt) d\tau \\
&=\frac{1}{k}\int_0^1e^{-kt}|x'^pdt-\frac{e^{-k}}{k}\int_0^1|x'^pdt \\
&\leq   \frac{1-e^{-k}}{k}\int_0^1e^{-kt}|x'^pdt
\leq \frac{ \|x\|_{\tilde{W}_0^{1,p},k}^p}{k}.
\end{align*}
Now let us turn to the relation \eqref{Bielecki_1_2}:
\begin{align*}
 \|\int_0^{\cdot }|x(\tau )|d\tau \|_{k}^p
&=\int_0^1e^{-kt}(\int_0^{t}|x(\tau )|d\tau )^pdt
 \leq \int_0^1e^{-kt}(\int_0^{t}|x(\tau )|^pd\tau )dt \\
&= \int_0^1|x(\tau )|^p(\int_{\tau }^1e^{-kt}dt)d\tau \\
&=\frac{1}{k}\int_0^1e^{-kt}|x(t)|^pdt-\frac{e^{-k}}{k}
\int_0^1|x(t)|^pdt \\
&\leq   \frac{\|x\|_{k}^p}{k}\leq \frac{\|x\|_{\tilde{W}_0^{1,p},k}^p
}{k^{2}}.
\end{align*}
To apply Theorem \ref{MainTheo} we can define functional $\varphi :\tilde{W}
_0^{1,p}\to \mathbb{R}$ as follows
\begin{align*}
\varphi (x)
&=(1/p)\| f(x)-y\| _{k}^p \\
&=(1/p)\int_0^1e^{-kt}| x'(t)-y(t)+\int_0^{t}\Phi (t,\tau ,x(\tau ))d\tau | ^pdt.
\end{align*}
We can define functional $\varphi :\tilde{W}_0^{1,p}\to \mathbb{R}$
in the form
\begin{align*}
\varphi (x)
&=(1/p)\| f(x)-y\| _{k}^p \\
&=(1/p)\int_0^1e^{-kt}| x'(t)-y(t)+\int_0^{t}\Phi (t,\tau ,x(\tau ))d\tau | ^pdt.
\end{align*}
Having in mind the relation \eqref{rel_equ_biel}, which states that $L^p$
norm $\|\cdot \|_{L^p}$ and the Bielecki norm $\|\cdot \|_{k}$ are
equivalent, the following inequality can be deduced for any
$x\in \tilde{W}_0^{1,p}$:
\begin{align*}
(p\varphi (x)) ^{1/p}
&=\| x'(\cdot) -y(\cdot ) +\int_0^{\cdot }\Phi (\cdot ,\tau
,x(\tau ))d\tau \| _{k} \\
&\geq \|x'\|_{k}-\|y\|_{k}-\|\int_0^{\cdot }\Phi (\cdot
,\tau ,x(\tau ))d\tau \|_{k} \\
&\geq \|x'\|_{k}-\|y\|_{k}-\overline{a}\|
\int_0^{\cdot }x(\tau )d\tau \| _{k}-\|
\int_0^{\cdot }b(\cdot ,\tau )d\tau \| _{k} \\
&\geq \|x\|_{\tilde{W}_0^{1,p},k}-\frac{\overline{a}}{k^{2/p}}
 \|x\|_{\tilde{W}_0^{1,p},k}+d,
\end{align*}
where $d=\|y\|_{k}-\| \int_0^{\cdot }b(\cdot ,\tau )d\tau \| _{k}$.
 For sufficiently large $k>0$, that is $k>\max \{1,\overline{a}^{\frac{p}{2}}\}$,
 we have the coercivity of functional $\varphi$.

Using the above estimates and exactly the same arguments as in
\cite{galewskikoniorczyk}, we can prove the following result.

\begin{theorem}\label{appTheo_1}
Under the above assumptions, for any fixed $y$ $\in L^p$,
problem \eqref{I_D_eq}-\eqref{I_D_iv} has a unique solution
$x_{y}\in \tilde{W}_0^{1,p}$. Moreover, the operator
\[
L^p\ni y\to x_{y}\in \tilde{W}_0^{1,p}
\]
which assigns to each $y\in L^p$ a solution to \eqref{I_D_eq}-\eqref{I_D_iv},
is continuously differentiable.
\end{theorem}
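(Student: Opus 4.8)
The plan is to cast problem \eqref{I_D_eq}--\eqref{I_D_iv} as an operator equation $f(x)=y$ between the uniformly convex Banach spaces $\tilde{W}_0^{1,p}$ and $L^p$, and then to apply Theorem \ref{MainTheo} (or equivalently Theorem \ref{GLOBIFnew_v1}). Concretely, I would define $f:\tilde{W}_0^{1,p}\to L^p$ by
\[
f(x)(t)=x'(t)+\int_0^{t}\Phi(t,\tau,x(\tau))\,d\tau,
\]
so that $x$ solves \eqref{I_D_eq}--\eqref{I_D_iv} exactly when $f(x)=y$ (the initial condition $x(0)=0$ is built into the space). It then suffices to verify that $f$ is a diffeomorphism and invoke the differentiability of $f^{-1}$ to obtain the asserted smooth dependence $y\mapsto x_y$.

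The verification splits into the three hypotheses of Theorem \ref{MainTheo}, taking $\eta(z)=\tfrac1p\|z\|_{L^p}^p$ (equivalently the Bielecki variant $\tfrac1p\|z\|_k^p$), which satisfies (A1) by Lemma \ref{lem_dif_LP} together with strict monotonicity of $t\mapsto t^{p-1}$. First I would check that $f$ is $C^1$: using (A7)--(A9), the Nemytskii-type superposition and the Volterra integral operator $x\mapsto\int_0^{\cdot}\Phi(\cdot,\tau,x(\tau))\,d\tau$ are continuously Fr\'echet differentiable, with derivative $f'(x)h=h'+\int_0^{\cdot}\Phi_x(\cdot,\tau,x(\tau))h(\tau)\,d\tau$; the growth bound (A9) on $\Phi_x$ with the embeddings \eqref{space_imbedding} make this operator well defined and continuous in $x$. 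Second, for (A3)/(A6) I would show $f'(x)\in\operatorname{Isom}(\tilde{W}_0^{1,p},L^p)$: given $g\in L^p$, solving $f'(x)h=g$ is the linear Volterra problem $h'(t)+\int_0^t\Phi_x(t,\tau,x(\tau))h(\tau)\,d\tau=g(t)$, $h(0)=0$, whose unique solvability and the bound $\|h\|_{\tilde{W}_0^{1,p}}\ge\alpha_x\|g\|_{L^p}$ follow from a Gronwall/contraction argument in the Bielecki norm \eqref{Bielecki_1_def} for $k$ large, exploiting \eqref{Bielecki_1_1}--\eqref{Bielecki_1_2} exactly as in the coercivity estimate already displayed.

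The main obstacle, and the reason the Bielecki norm is introduced, is hypothesis (A2): I must show that $\varphi(x)=\tfrac1p\|f(x)-y\|_k^p$ satisfies the Palais--Smale condition. By Proposition (the one preceding Theorem \ref{CPT}) it suffices to establish coercivity of $\varphi$ together with the (PS)-adequate compactness; coercivity is precisely the displayed chain of inequalities, which yields
\[
(p\varphi(x))^{1/p}\ge\Big(1-\frac{\overline{a}}{k^{2/p}}\Big)\|x\|_{\tilde{W}_0^{1,p},k}+d,
\]
so that for $k>\max\{1,\overline{a}^{p/2}\}$ the coefficient is positive and $\varphi(x)\to\infty$ as $\|x\|_{\tilde{W}_0^{1,p}}\to\infty$. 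The delicate point is that coercivity alone does not give (PS) in infinite dimensions, so I would instead verify (PS) directly: taking a sequence $(x_n)$ with $\varphi(x_n)$ bounded and $\varphi'(x_n)\to0$, coercivity forces boundedness of $(x_n)$, and then the compactness of the Volterra integral operator (the term $\int_0^{\cdot}\Phi$ maps bounded sets into a relatively compact set via the Arzel\`a--Ascoli / compact embedding mechanism, while the dominant $x'$-term is controlled through $\varphi'(x_n)$ and the uniform convexity of $\tilde{W}_0^{1,p}$) yields a convergent subsequence. Once (A1)--(A3) are in place, Theorem \ref{MainTheo} gives that $f$ is a diffeomorphism, hence $f^{-1}$ is $C^1$, which is exactly the continuous differentiability of $y\mapsto x_y$ claimed in the statement. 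Since the estimates differ from those of \cite{galewskikoniorczyk} only in the replacement of the ordinary norm by the Bielecki norm, I would present only these modified inequalities and refer to \cite{galewskikoniorczyk} for the remaining routine verifications.
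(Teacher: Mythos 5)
Your proposal follows essentially the same route as the paper: recast \eqref{I_D_eq}--\eqref{I_D_iv} as $f(x)=y$ with $f:\tilde{W}_0^{1,p}\to L^p$, take the auxiliary functional $\varphi(x)=\tfrac1p\|f(x)-y\|_k^p$ built from the Bielecki norm, establish coercivity via the displayed estimate for $k>\max\{1,\overline{a}^{p/2}\}$, and verify the remaining hypotheses of Theorem \ref{MainTheo} (smoothness of $f$, invertibility of $f'(x)$ via the linear Volterra problem, and the Palais--Smale condition checked directly rather than deduced from coercivity) by the arguments of \cite{galewskikoniorczyk}, concluding that $f$ is a diffeomorphism and hence $y\mapsto x_y=f^{-1}(y)$ is $C^1$. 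This matches the paper's proof, which consists of exactly the new Bielecki-norm coercivity estimate plus a citation of \cite{galewskikoniorczyk} for the rest; your correct observation that coercivity alone does not imply (PS) in infinite dimensions is a point the paper leaves implicit.
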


We complete this section with an example of a nonlinear term satisfying our
assumptions (A7)--(A9).
Let us consider the function
$\Phi :P_{\Delta }\times \mathbb{R}\to \mathbb{R}$ defined by
\[
\Phi (t,\tau ,x)=\alpha (t-\tau )^{5/2}\ln (1+(t-\tau )^{2}x^{2})
\]
for $t,\tau \in [0,1]$, $t>\tau $, $x\in \mathbb{R}$, where $\alpha >0$
is fixed. Since $\ln (1+s^{2}z^{2})\leq |s|+|z|$ for $s,z\in \mathbb{R}$,
we see that
\[
|\Phi (t,\tau ,x)|\leq \alpha (t-\tau )^{5/2}|x|+\alpha (t-\tau )^{5/2}.
\]
Let us put
\[
a(t,\tau )=\alpha (t-\tau )^{5/2}
\]
for $t,\tau $ $\in [ 0,1]$, $t>\tau $. Then by a direct calculation we
obtain
\[
\|a\|_{L^p(P_{\Delta },\mathbb{R})}^p\leq \alpha ^p\frac{4}{(5p+2)(5p+4)}
=:\overline{a}.
\]
Moreover,
\begin{gather*}
|\Phi _{x}(t,\tau ,x)|\leq \alpha (t-\tau )^{5/2}|x|, \\
\int_0^{t}c(t,\tau )^{q}d\tau =2^{-p}\int_0^{t}(t-\tau )^{5q/2}d\tau =
\frac{2^{1-p}}{5q+2}t^{(5q/2)+1}\leq \frac{2^{1-p}}{5q+2}, \quad t\in [ 0,1].
\end{gather*}
Hence, $\Phi $ satisfies assumptions (A7)--(A9).
 Theorem \ref{MainTheo} shows that the initial-value problem
\[
x'(t)+\int_0^{t}2^{1-p}(t-\tau )^{1/2}\ln (1+(t-\tau
)^{2}x^{2})d\tau
=y(t),\quad \text{a.e. } t\in [ 0,1]
\]
has a unique solution $x_{y}\in \tilde{W}_0^{1,p}$ for any fixed
$y\in L^p$. Moreover, the mapping
\[
L^p\ni y\to x_{y}\in \tilde{W}_0^{1,p}
\]
is continuously differentiable.

\subsection*{Acknowledgements}
D. Repov\v{s}  was supported by the Slovenian Research Agency grants P1-0292,
N1-0083, N1-0064, J1-8131, and J1-7025.

\end{document}